\numberwithin{equation}{section}
\newcommand{\red}{\color{red}}
\renewcommand{\le}{\leqslant}
\renewcommand{\ge}{\geqslant}
\newcommand{\G}{\Gamma}
\newcommand{\e}{\varepsilon}
\renewcommand{\o}{\omega}
\renewcommand{\O}{\Omega(t)}
\renewcommand{\d}{\delta}
\newcommand{\bR}{{\mathbb R}}
\newcommand{\m}{\hspace{1em}}
\newcommand{\mm}{\hspace{2em}}
\newcommand{\for}{\m\text{ for }}
\newcommand\beaa{\begin{eqnarray*}}
\newcommand\eeaa{\end{eqnarray*}}
\newcommand{\re}[1]{\mbox{$($\ref{#1}$)$}}
\newtheorem{thm}{Theorem}[section]
\newtheorem{lem}[thm]{Lemma}
\newtheorem{cor}[thm]{Corollary}
\newtheorem{rems}[thm]{Remarks}
\journal{Nonlinear Anal. Real World Appl.}
\begin{document}

\begin{frontmatter}
\title{The existence and linear stability of periodic solution
 for a free boundary problem modeling
tumor growth with a periodic supply of external nutrients}

\author[sysu]{Wenhua He}
\ead{hewh27@mail2.sysu.edu.cn}
\author[sysu]{Ruixiang Xing\corref{cor}}
\ead{xingrx@mail.sysu.edu.cn}

\cortext[cor]{Corresponding author}

\address[sysu]{School of Mathematics, Sun Yat-sen University, Guangzhou 510275, China}

\begin{abstract}
We study a free boundary problem modeling tumor growth with a T-periodic supply $\Phi(t)$ of external nutrients. 
The model contains two parameters $\mu$ and $\widetilde{\sigma}$. We first show that (i) zero radially symmetric solution is globally stable if and only if $\widetilde{\sigma}\ge \frac{1}{T} \int_{0}^{T} \Phi(t) d t$; (ii) If $\widetilde{\sigma}<\frac{1}{T} \int_{0}^{T} \Phi(t) d t$, then there exists a unique radially symmetric positive solution $\left(\sigma_{*}(r, t), p_{*}(r, t), R_{*}(t)\right)$ with period $T$ and it is a global attractor of all positive radially symmetric solutions for all $\mu>0$.
These results are a perfect answer to open problems in Bai and Xu [Pac. J. Appl. Math. 2013(5), 217-223]. Then, considering non-radially symmetric perturbations, we prove that there exists a constant $\mu_{\ast}>0$ such that $\left(\sigma_{*}(r, t), p_{*}(r, t), R_{*}(t)\right)$ is linearly stable for $\mu<\mu_{\ast}$ and linearly unstable for $\mu>\mu_{\ast}$.
\end{abstract}

\begin{keyword}
Tumor growth \sep Free boundary problem \sep Periodic solution \sep Linear stability.

\end{keyword}
\end{frontmatter}

\section{Introduction}\label{sec:intro}

Consider a free boundary problem modeling tumor growth with a periodic supply of external nutrients:
\begin{align}\label{1.1}
&\Delta \sigma=\sigma   && x\in \Omega(t),  t>0,\\\label{1.2}
&-\Delta p=\mu(\sigma-\widetilde{\sigma})  && x\in \Omega(t),   t>0,\\\label{1.3}
&V_{n}=-\frac{\partial p}{\partial \mathbf{n}}  && x\in \partial \Omega(t),  t>0,\\\label{1.4}
&\sigma=\Phi(t)     && x\in \partial \Omega(t),    t>0,\\\label{1.5}
&p=\gamma \kappa  &&x\in \partial \Omega(t),    t>0,\\\label{1.6}
&\Omega(0)=\Omega_{0},
\end{align}
where $\O\subseteq \mathbb{R}^{3}$ is the domain occupied by tumor at time $t$, $\sigma$ denotes the concentration of nutrients, $p$ is the pressure in the tumor, $\widetilde{\sigma}$ denotes a threshold concentration for proliferation, $\mu$ is proportional coefficient in Darcy law, $V_{n}$ denotes the velocity of the free boundary in the unit outward normal direction $\mathbf{n}$, $\gamma$ is cell adhesiveness coefficient, $\kappa$ is the mean curvature and $\Phi(t)$ is concentration of external nutrients which is a continuous, positive periodic function satisfying $\Phi(t+T)=\Phi(t)$. The detailed introduction of this model is referred to \cite{Bai2013,Huang2019}.

At first, we study radially symmetric solutions.
For radially symmetric solutions, problem \eqref{1.1}--\eqref{1.6} is reduced to
\begin{align}\label{1.7}
&\frac{1}{r^{2}} \frac{\partial}{\partial r}\left(r^{2} \frac{\partial \sigma}{\partial r}\right)=\sigma    && r \in(0, R(t)),  t>0,\\\label{1.8}
&-\frac{1}{r^{2}} \frac{\partial}{\partial r}\left(r^{2} \frac{\partial p}{\partial r}\right)=\mu(\sigma-\widetilde{\sigma})  && r \in(0, R(t)),  t>0,\\\label{1.9}
&\frac{d R}{d t}(t)=-\frac{\partial p}{\partial r}  && r=R(t),  t>0,\\\label{1.10}
&\sigma=\Phi(t)    && r=R(t),    t>0,\\\label{1.11}
&p= \frac{\gamma}{R(t)}   &&r=R(t),    t>0,\\\label{1.12}
&R(0)=R_{0}.
\end{align}

The solution of \re{1.7} and \re{1.10} is
\begin{align}\label{1.13}
\sigma(r, t)=\Phi(t) \frac{R(t)}{\sinh R(t)} \frac{\sinh r}{r},
\end{align}
and the solution of \re{1.8} and \re{1.11} is
\begin{equation}\label{1.14}
p(r,t)=\frac{1}{6} \mu \widetilde{\sigma} r^{2}-\mu \sigma(r, t)+\frac{\gamma}{R(t)}-\frac{1}{6} \mu \widetilde{\sigma} R^{2}(t)+\mu \Phi(t).
\end{equation}
From \re{1.13} and \re{1.14}, problem \re{1.7}--\re{1.12} is reduced to the following problem:
\begin{align}\label{1.15}
&\frac{d R}{d t}(t)=\mu R(t)\left[\Phi(t) P_{0}(R(t))-\frac{\widetilde{\sigma}}{3}\right],\\ \label{1.155}
&R(0)=R_{0},
\end{align}
where $P_{0}(x)=\frac{x \operatorname{coth} x-1}{x^{2}}$. If $R(t)$ is a solution of \re{1.15}--\re{1.155},  then $(\sigma,$ $ p, R)$  ($\sigma$ and $ p$ are given by \re{1.13} and \re{1.14}, respectively) is a solution of \re{1.7}--\re{1.12}.

Denote
\begin{align}\nonumber
\overline{\Phi}=\frac{1}{T} \int_{0}^{T} \Phi(t) d t, \quad \Phi^{*}=\max _{0 \le t \le T} \Phi(t), \quad \Phi_{*}=\min _{0 \le t \le T} \Phi(t).
\end{align}

Bai and Xu (\cite{Bai2013}) studied problem \re{1.15}--\re{1.155} and proved the following results: \\
$(i)$ If $\widetilde{\sigma}>\overline{\Phi}$, then the solution $R(t)\equiv0$ of  \re{1.15} is globally stable. If the solution $R(t)\equiv0$ of  \re{1.15} is globally stable, then $\widetilde{\sigma}\ge\overline{\Phi}$;\\
$(ii)$ If $\widetilde{\sigma}<\Phi_{\ast}$, then \re{1.15} admits a unique T-periodic solution $R_{\ast}(t)$. Moreover, any positive solution $R(t)$ of \re{1.15} converges to $R_{\ast}(t)$ as $t \rightarrow+\infty$.

Notice that when $\Phi(t)$ isn't a constant function, then $\Phi_{\ast}<\overline{\Phi}$ and there is a gap $\widetilde{\sigma}\in[\Phi_{\ast}, \overline{\Phi})$ between the results of $(i)$ and $(ii)$. So they  proposed two open problems:\\ 
(1) Is $R_{\ast}(t)\equiv0$ globally stable as $\widetilde{\sigma}=\overline{\Phi}$?\\
(2) Does there exist a unique periodic solution $R_{\ast}(t)$ of \re{1.15} as $\widetilde{\sigma}<\overline{\Phi}$? Is it a global attractor of all positive solutions?

We give an affirmative answer to the two open problems. 
The following theorems are our main results.
\begin{thm}\label{thm:1.1}
 When $\widetilde{\sigma}=\overline{\Phi}$, the solution $R(t)\equiv0$ of \re{1.15} is globally stable. 
\end{thm}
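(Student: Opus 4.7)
The plan is to study the time-$T$ Poincaré map $\mathcal{P}: R_0 \mapsto R(T;R_0)$ of equation \re{1.15} and to show that when $\widetilde{\sigma}=\overline{\Phi}$, every positive orbit $\{R(nT)\}_{n\ge 0}$ is strictly decreasing and converges to the only fixed point of $\mathcal{P}$, namely $0$. The structural ingredient driving this is a very classical property of the function $P_0$: a direct computation shows that $P_0(x)=\frac{x\coth x-1}{x^{2}}$ is smooth and strictly decreasing on $[0,\infty)$ with $P_0(0)=\frac13$ and $P_0(x)\to 0$ as $x\to\infty$. In particular $P_0(x)<\frac13$ for every $x>0$, and any positive solution of \re{1.15} stays positive by uniqueness since $R\equiv 0$ is itself a solution. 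A quick sign argument, using $P_0(R)\to 0$ for large $R$, also shows that any solution is globally defined and bounded.

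The main computation is to differentiate $\ln R(t)$: from \re{1.15} with $\widetilde{\sigma}=\overline{\Phi}$,
\begin{equation*}
\frac{d}{dt}\ln R(t) = \mu\bigl[\Phi(t)P_0(R(t)) - \tfrac{\overline{\Phi}}{3}\bigr].
\end{equation*}
For any $n\ge 0$, integrating over $[nT,(n+1)T]$ and using $R(s)>0$ together with $P_0(R(s))<\tfrac13$ pointwise, one obtains
\begin{equation*}
\ln R((n+1)T)-\ln R(nT)
= \mu\!\int_{nT}^{(n+1)T}\!\!\Phi(s)\Bigl[P_0(R(s))-\tfrac{1}{3}\Bigr]ds
+\mu\!\int_{nT}^{(n+1)T}\!\!\tfrac{\Phi(s)-\overline{\Phi}}{3}ds.
\end{equation*}
The second integral vanishes by $T$-periodicity of $\Phi$ and the definition of $\overline{\Phi}$, while the first is strictly negative because $\Phi>0$ and $P_0(R(s))<\tfrac13$. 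Therefore $R((n+1)T)<R(nT)$ for all $n$, so $R(nT)$ is strictly monotone and converges to some $R^\ast\ge 0$.

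It remains to identify $R^\ast$ and then pass from $t=nT$ to all $t$. Continuous dependence on initial data makes $\mathcal{P}$ continuous on $[0,\infty)$, and the strict inequality just proved yields $\mathcal{P}(r)<r$ for every $r>0$ together with $\mathcal{P}(0)=0$; hence $0$ is the only fixed point, so $R^\ast=\mathcal{P}(R^\ast)=0$. Finally, since the solution starting at a small initial value stays uniformly small on $[0,T]$ by continuous dependence, $R(nT)\to 0$ implies $\sup_{0\le s\le T}R(nT+s)\to 0$, and so $R(t)\to 0$ as $t\to\infty$ for every initial datum $R_0>0$. I do not anticipate a serious obstacle; the only care-points are verifying the strict monotonicity $P_0(x)<\tfrac13$ for $x>0$ (so the per-period decrement is strict and not merely non-positive), and the continuous-dependence step that upgrades convergence along the discrete sequence $\{nT\}$ to convergence along the continuum.
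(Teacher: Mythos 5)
Your proof is correct. It shares the paper's overall skeleton --- a per-period decrease of $R$ obtained from $P_0<\tfrac13$ together with $\int_0^T(\Phi-\overline{\Phi})\,dt=0$, followed by control of $R$ within a single period --- but the central step that forces the limit to be zero is carried out by a genuinely different mechanism. The paper proves $\liminf_{t\to\infty}R(t)=0$ by contradiction: if $R(t)\ge\alpha-\varepsilon>0$ eventually, the strict monotonicity of $P_0$ gives $P_0(R(t))\le P_0(\alpha-\varepsilon)<\tfrac13$, whence $R(t^{*}+nT)\le R(t^{*})\,e^{\mu nT[\overline{\Phi}P_0(\alpha-\varepsilon)-\widetilde{\sigma}/3]}\to0$; it then combines $\liminf=0$ with the non-strict inequality $R(t+T)\le R(t)$ and a one-period growth bound. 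You instead sharpen the per-period inequality to a strict one, deduce that the decreasing sequence $R(nT)$ converges, and identify the limit as a fixed point of the continuous Poincar\'e map $\mathcal{P}$, which must be $0$ because $\mathcal{P}(r)<r$ for all $r>0$. Your route is softer: it uses only $P_0(x)<\tfrac13$ for $x>0$ and continuous dependence on initial data, not the strict monotonicity of $P_0$, and it dispenses with the $\liminf$/contradiction bookkeeping; the paper's route, in exchange, yields an explicit exponential decay rate whenever the orbit stays bounded away from zero. Both arguments finish identically, converting smallness of $R$ at the times $nT$ into smallness for all large $t$ via a one-period Gronwall-type bound (which is what your continuous-dependence step amounts to, and is exactly the paper's estimate \re{3.2}).
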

Together with \cite[Theorem 2.2 and 2.3]{Bai2013} or the above result (i) , it implies the following result. 
\begin{thm}\label{thm:1.1'}
 The solution $R(t)\equiv0$ of \re{1.15} is globally stable if and only if  $\widetilde{\sigma}\ge\overline{\Phi}$.
\end{thm}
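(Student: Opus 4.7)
The plan is to derive this biconditional by combining Theorem \ref{thm:1.1} with the two assertions of result (i) from \cite{Bai2013} recalled above; no new dynamical analysis is required beyond what Theorem \ref{thm:1.1} has already accomplished.

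For the necessity direction, assume $R(t)\equiv 0$ is globally stable. The second assertion of (i) from \cite{Bai2013} states precisely that global stability of the zero solution forces $\widetilde{\sigma}\ge\overline{\Phi}$, so this implication is immediate. For the sufficiency direction, assume $\widetilde{\sigma}\ge\overline{\Phi}$ and split into two cases according to whether the inequality is strict. If $\widetilde{\sigma}>\overline{\Phi}$, then the first assertion of (i) from \cite{Bai2013} yields that $R(t)\equiv 0$ is globally stable. If $\widetilde{\sigma}=\overline{\Phi}$, then global stability of $R(t)\equiv 0$ is exactly the content of Theorem \ref{thm:1.1}. In either subcase the conclusion holds, so the two directions together prove the equivalence.

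There is no substantive obstacle at this point: the only genuinely new input is the borderline case $\widetilde{\sigma}=\overline{\Phi}$, which is precisely the gap left open in \cite{Bai2013} and which Theorem \ref{thm:1.1} has already closed. Theorem \ref{thm:1.1'} thus functions as a packaging statement, recording the sharp if-and-only-if characterization that emerges once Theorem \ref{thm:1.1} is concatenated with the two previously known one-sided results.
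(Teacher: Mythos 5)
Your proposal is correct and matches the paper's own derivation exactly: the paper obtains Theorem \ref{thm:1.1'} by combining Theorem \ref{thm:1.1} (the borderline case $\widetilde{\sigma}=\overline{\Phi}$) with the two one-sided assertions of result (i) from \cite{Bai2013}, just as you do. No further comment is needed.
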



\begin{thm}\label{thm:1.2}
If $\widetilde{\sigma}<\overline{\Phi}$ , then\\
(i) \re{1.15} admits a unique T-periodic positive solution $R_{\ast}(t)$. \\
(ii) For any the positive solution $R(t)$, there exist $\delta>0$ and $C>0$ such that
\begin{align}
|R(t)-R_{\ast}(t)|\le C e^{-\delta t}\qquad for ~    t>0, \ \label{1.17}
\end{align}
i.e., $R(t)-R_{\ast}(t)$ decreases exponentially fast to $0$.
\end{thm}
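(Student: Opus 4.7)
The whole argument will be driven by the Poincar\'e map $P:(0,\infty)\to(0,\infty)$ defined by $P(R_{0})=R(T;R_{0})$, where $R(t;R_{0})$ is the solution of \re{1.15}--\re{1.155}. Since \re{1.15} is scalar, standard ODE theory gives that $P$ is well defined, $C^{1}$, and strictly monotone increasing in $R_{0}$. I will also use the (easily checked) facts that $P_{0}(x)=(x\coth x-1)/x^{2}$ is strictly decreasing on $(0,\infty)$ with $P_{0}(0^{+})=1/3$, $P_{0}(+\infty)=0$ and $P_{0}'(x)<0$.

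For the existence part of (i), I will exhibit an invariant interval $[a,b]$ for $P$. Choosing $b$ so large that $\Phi^{\ast}P_{0}(b)<\widetilde{\sigma}/3$ makes the right-hand side of \re{1.15} negative on $\{R\ge b\}$, so any orbit starting at $b$ is nonincreasing and $P(b)\le b$. For the lower end, I will compare with the linearisation at $R=0$, namely $r'=\mu r(\Phi(t)-\widetilde{\sigma})/3$, whose $T$-period multiplier equals $\exp\!\bigl(\mu T(\overline{\Phi}-\widetilde{\sigma})/3\bigr)>1$ by the hypothesis $\widetilde{\sigma}<\overline{\Phi}$; combined with $P_{0}(R)=1/3-O(R^{2})$ near zero, this yields $P(a)>a$ for all sufficiently small $a>0$. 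The intermediate value theorem applied to $P(x)-x$ on $[a,b]$ then produces a positive fixed point $R_{0}^{\ast}$, from which the periodic solution $R_{\ast}(t)$ is recovered via \re{1.13}--\re{1.14}. Uniqueness will follow from a standard monotonicity argument: if $R_{1}^{\ast},R_{2}^{\ast}$ are two positive $T$-periodic solutions with $R_{1}^{\ast}(0)<R_{2}^{\ast}(0)$, monotonicity of the flow gives $R_{1}^{\ast}(t)<R_{2}^{\ast}(t)$ for every $t$, while dividing \re{1.15} by $R$ and integrating over $[0,T]$ yields, by periodicity,
\[\int_{0}^{T}\Phi(t)P_{0}(R_{i}^{\ast}(t))\,dt=\frac{T\widetilde{\sigma}}{3}\quad(i=1,2),\]
contradicting the strict decrease of $P_{0}$.

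For part (ii), the key step is to compute the multiplier $P'(R_{0}^{\ast})$ from the variational equation $\phi'(t)=G_{R}(R_{\ast}(t),t)\phi(t)$, $\phi(0)=1$, where $G(R,t)$ denotes the right-hand side of \re{1.15}. Using $\mu[\Phi(t)P_{0}(R_{\ast})-\widetilde{\sigma}/3]=R_{\ast}'/R_{\ast}$ on the orbit and the fact that $\int_{0}^{T}R_{\ast}'/R_{\ast}\,dt=0$ by periodicity, a short calculation gives
\[\ln P'(R_{0}^{\ast})=\mu\int_{0}^{T}R_{\ast}(t)\Phi(t)P_{0}'(R_{\ast}(t))\,dt<0,\]
so $0<P'(R_{0}^{\ast})<1$ and $P$ is a strict contraction near $R_{0}^{\ast}$, yielding local exponential convergence. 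To globalise, monotonicity of $P$ together with uniqueness of the fixed point forces the sequence $\{R(nT;R_{0})\}_{n\ge0}$ to be monotone in $n$ and eventually trapped in $[a,b]$, hence convergent to $R_{0}^{\ast}$. Once the orbit enters a neighbourhood of $R_{0}^{\ast}$ on which $|P(x)-R_{0}^{\ast}|\le\rho|x-R_{0}^{\ast}|$ with some $\rho\in(0,1)$, geometric decay at the discrete times $nT$ follows, and continuous dependence on initial data over a single period upgrades this to the continuous-time estimate \re{1.17}.

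The main obstacle I anticipate is the quantitative globalisation of the exponential rate: one must control the entry time into the contraction neighbourhood uniformly in a controllable way so that the final constants $\delta$ and $C$ in \re{1.17} come out in true exponential form. Orbits starting either very close to $0$ or very far from $R_{\ast}$ both require a separate trapping-time estimate, which uses the monotonicity of $P$ together with the sign of the right-hand side of \re{1.15} on $(0,a]$ and on $[b,\infty)$.
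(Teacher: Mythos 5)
Your proposal is correct, and it reaches the result by a genuinely different route in two places. For existence both you and the paper run a Poincar\'e-map argument on an invariant interval (your IVT on $P(x)-x$ is the one-dimensional Brouwer argument the paper uses), but the lower barrier is built differently: the paper integrates an explicit sub-solution $\overline{R}(t)$ started from the explicit point $\overline{x}=P_{0}^{-1}(\widetilde{\sigma}/(3\overline{\Phi}))e^{-\mu(\Phi^{*}-\widetilde{\sigma})T/3}$ and shows $\overline{R}(T)\ge\overline{x}$, whereas you deduce $P(a)>a$ for small $a$ from the period multiplier $\exp(\mu T(\overline{\Phi}-\widetilde{\sigma})/3)>1$ of the linearisation at $0$ (this does require the routine a priori bound $R(t)\le C a$ on $[0,T]$ coming from $0<P_{0}<1/3$ to control the $O(R^{2})$ correction, which you should record). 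Your uniqueness argument --- integrating $R'/R$ over a period to get $\int_{0}^{T}\Phi P_{0}(R_{i}^{*})\,dt=T\widetilde{\sigma}/3$ and invoking strict monotonicity of $P_{0}$ --- is a clean direct proof that the paper does not have; the paper instead derives uniqueness as a corollary of the attractivity in (ii). The largest divergence is in (ii): the paper substitutes $R=R_{*}e^{y}$ and runs a single differential inequality via the mean value theorem on $P_{0}$, obtaining \re{1.17} for all $t>0$ with the explicit rate $\delta=\mu\Phi_{*}M_{\min}R_{\min}\min\{1,e^{y(0)}\}$ in one stroke; you instead compute the Floquet multiplier $\ln P'(R_{0}^{*})=\mu\int_{0}^{T}R_{*}\Phi P_{0}'(R_{*})\,dt<0$ and combine local contraction with monotone convergence of the Poincar\'e iterates. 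Your route identifies the asymptotically sharp decay exponent, at the cost of a two-stage (local plus globalisation) argument whose constants depend on the entry time into the contraction neighbourhood; the paper's route is more elementary and globally quantitative from $t=0$. Both are valid, and the trapping issue you flag is genuinely resolved by the monotonicity of $P$ together with the sign of $P(x)-x$ on either side of the unique fixed point.
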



Theorem \ref{thm:1.1'} and \ref{thm:1.2} give a complete classification for the parameter $\widetilde{\sigma}$ and show us that if the average of the supply $\Phi(t)$ of external nutrients isn't  larger than the threshold concentration $\widetilde{\sigma}$ for proliferation, then all spherical tumors will disappear while  if the average of the supply $\Phi(t)$ of external nutrients is  larger than the threshold concentration $\widetilde{\sigma}$ for proliferation, then there exists a unique spherical tumor with periodic change and all the other spherical tumors don't disappear and they evolve to this periodic tumor.

\vskip 2mm

As a direct corollary of Theorem \ref{thm:1.2}, we have the following result.
\begin{cor}\label{cor:1.3333}
 If $\widetilde{\sigma}<\overline{\Phi}$, then there exists a unique T-periodic solution $(\sigma_{*}(r, t), p_{*}(r, t), R_{*}(t))$ of \re{1.7}--\re{1.12} given by
\begin{align}\nonumber
&\sigma_{\ast}(r, t)=\Phi(t)\frac{ R_{\ast}(t)}{\sinh R_{\ast}(t)} \frac{\sinh r}{r}, \\\label{1.1992}
&p_{\ast}(r, t)=\frac{1}{6} \mu \widetilde{\sigma} r^{2}-\mu\sigma_{\ast}(r, t)+\frac{\gamma}{R_{\ast}(t)}-\frac{1}{6} \mu \widetilde{\sigma} R_{\ast}^{2}(t)+\mu \Phi(t),
\end{align}
where $R_{*}(t)$ is the unique T-periodic positive solution of \re{1.15}.
\end{cor}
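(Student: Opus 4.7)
The corollary is essentially a packaging statement: it claims that the full three-component free boundary problem \eqref{1.7}--\eqref{1.12} inherits existence and uniqueness of a $T$-periodic positive solution directly from Theorem \ref{thm:1.2} applied to the scalar ODE \eqref{1.15}. The plan is to exploit the explicit reduction already carried out in Section 1, which showed that \eqref{1.13} and \eqref{1.14} solve \eqref{1.7}, \eqref{1.10} and \eqref{1.8}, \eqref{1.11} respectively, for any smooth positive $R(t)$, so that $(\sigma, p, R)$ solves \eqref{1.7}--\eqref{1.12} if and only if $R$ solves \eqref{1.15}--\eqref{1.155}.

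\textbf{Existence.} First I would invoke Theorem \ref{thm:1.2}(i) under the hypothesis $\widetilde{\sigma} < \overline{\Phi}$ to produce a unique $T$-periodic positive solution $R_{\ast}(t)$ of \eqref{1.15}. I would then define $\sigma_{\ast}(r,t)$ and $p_{\ast}(r,t)$ by substituting $R_{\ast}$ in \eqref{1.13} and \eqref{1.14}, so that the reduction lemma guarantees $(\sigma_{\ast}, p_{\ast}, R_{\ast})$ solves \eqref{1.7}--\eqref{1.12} (the initial condition \eqref{1.12} is satisfied with $R_{0}=R_{\ast}(0)$). Periodicity of $\sigma_{\ast}$ and $p_{\ast}$ follows immediately from $R_{\ast}(t+T)=R_{\ast}(t)$ and $\Phi(t+T)=\Phi(t)$ by inspection of the right-hand sides of \eqref{1.13} and \eqref{1.1992}.

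\textbf{Uniqueness.} For the converse direction, suppose $(\widetilde{\sigma}_{1}(r,t), \widetilde{p}_{1}(r,t), \widetilde{R}_{1}(t))$ is any $T$-periodic positive solution of \eqref{1.7}--\eqref{1.12}. For each fixed $t$, the elliptic boundary value problem \eqref{1.7}, \eqref{1.10} on $(0,\widetilde{R}_{1}(t))$ has \eqref{1.13} (with $R$ replaced by $\widetilde{R}_{1}$) as its unique bounded solution, and similarly \eqref{1.8}, \eqref{1.11} force the formula \eqref{1.14} for the pressure. Substituting these back into \eqref{1.9} shows that $\widetilde{R}_{1}$ must satisfy \eqref{1.15}, so by the uniqueness clause of Theorem \ref{thm:1.2}(i), $\widetilde{R}_{1}\equiv R_{\ast}$, and then $\widetilde{\sigma}_{1}\equiv \sigma_{\ast}$, $\widetilde{p}_{1}\equiv p_{\ast}$.

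\textbf{Main difficulty.} There is no genuine obstacle: every nontrivial analytical step has been done either in the reduction preceding \eqref{1.15} or inside Theorem \ref{thm:1.2}. The only mild care required is to note that a $T$-periodic solution of the full PDE system automatically yields a $T$-periodic solution of the reduced ODE \eqref{1.15} (immediate from the explicit solution formulas), so that Theorem \ref{thm:1.2}(i) applies without extra hypotheses.
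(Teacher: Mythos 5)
Your proposal is correct and matches the paper's treatment: the paper states this result as a direct corollary of Theorem \ref{thm:1.2}, relying on exactly the reduction from \re{1.7}--\re{1.12} to \re{1.15}--\re{1.155} via the explicit formulas \re{1.13}--\re{1.14} that you invoke. Your write-up merely makes explicit the existence and uniqueness directions that the paper leaves implicit, with no difference in substance.
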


Recently, for two-space dimensional problem of \eqref{1.1}--\eqref{1.6}, Huang, Zhang and Hu (\cite{Huang2019}) have studied the linear stability of the periodic solution under all non-radially symmetric perturbations.

In this paper, we also extend the linear stability (\cite[Theorem $1.1$]{Huang2019}) of the periodic solution under non-radially symmetric perturbations from two-space dimensional case to three-space dimensional case. Precisely, considering non-radially symmetric perturbations, we prove that there exists a constant $\mu_{\ast}>0$ such that the periodic solution $\left(\sigma_{*}(r, t), p_{*}(r, t), R_{*}(t)\right)$ (given in Corollary \ref{cor:1.3333}) is linearly stable for $\mu<\mu_{\ast}$ and linearly unstable for $\mu>\mu_{\ast}$ (see Theorem \ref{thm:5.1}).

%

In recent years, many research works have been done on various tumor models (see e.g., \cite{Byrne1997,Friedman1999,Friedman2001,Friedman2001a,Fontelos2003,Friedman2006,Friedman2006c,Cui2007,Friedman2007,Friedman2007a,Zhang2009,Lowengrub2010,Escher2011,Hao2012,Hao2012a,
Wu2012,Wang2014,Wu2015,Wu2017,Friedman2008,Wang2017,Huang2017,Li2017,Pan2018a} and the references therein).
If $\Phi(t)$ is a constant and \re{1.1} is replaced by $\varepsilon\sigma_t-\Delta \sigma+\sigma =0$, many interesting results about the existence and stability of the stationary solution have been established (see \cite{Fontelos2003,Friedman2006,Friedman2006c,Friedman2008}). The tumor model with the general consumption rate of the nutrients and the general tumor cell proliferation rate has been studied by Cui and Escher (see \cite{Cui2007}). If the boundary \eqref{1.4} is replaced by the boundary condition $\frac{\partial \sigma}{\partial n}+ \alpha (\sigma-\overline{\sigma})=0(\alpha>0)$, this model and the general case have been considered by Huang, Zhang and Hu (\cite{Huang2017}) and Cui and Zhuang (\cite{Zhuang2018,Cui2018,Zhuang2018a}). The results about the existence and stability of the stationary solution have been extended in \cite{Zhou2015,Wu2016,Wu2017,Wu2017a} to the case involving Gibbs-Thomson relation
on the boundary. The tumor model in the presence of inhibitor has been investigated in \cite{Wang2014,Wu2015,Li2017,Cui2000a,Cui2002,Wu2007}.
The tumor model with ECM and MDE interactions was
analyzed in \cite{Pan2018a,Li2019}. Friedman et al. (\cite{Friedman2007a,Friedman2007b,Friedman2006a,Wu2009}) have studied the various tumor models in fluid-like
tissue.

The paper is organized as follows. In Section 2, we establish Theorem \ref{thm:1.1} about the global stability of zero solution of \re{1.15}. In Section 3, we derive Theorem \ref{thm:1.2} about the existence and asymptotically stable of the periodic solution under radially symmetric perturbations.
We discuss the linear stability of the periodic solution of three-space dimensional problem under non-radially symmetric perturbations in Section 4.

\section{Stability of zero Equilibrium
}\label{result}
In this section, we shall prove Theorem \ref{thm:1.1}.

\vskip 3mm\noindent
{\it\bf Proof of Theorem \ref{thm:1.1}.} 
Assume $\widetilde{\sigma}=\overline{\Phi}$ and $R(t)$ is a solution of \re{1.15} with the initial value $R_{0}>0$. Then $R(t)>0$.
We split the proof into three steps.

{\bf Step 1:}  At first, we claim
\begin{align}
&R(t+T)\le R(t) \qquad for ~t>0, \label{3.33}\\
&R(t)\le R(a)e^{ \displaystyle\mu\frac{\Phi^{*}-\widetilde{\sigma}}{3}T}  \qquad for ~  t\in[a,a+T], \ \label{3.2}
\end{align}
where $a\ge 0$.

Since $0<P_{0}(x)<\frac{1}{3}$ (\cite{Friedman2006}), \re{1.15} implies
\begin{align}
 \frac{d R}{d t}\le \mu R(t)\left[\frac{\Phi(t)}{3}-\frac{\widetilde{\sigma}}{3}\right]. \ \label{3.222}
\end{align}
Hence
\begin{equation*}
R(t+T) \leqslant R(t)e^{  \displaystyle\int_{t}^{t+T}\mu\left[\frac{\Phi(t)}{3}-\frac{\widetilde{\sigma}}{3}\right] d t}=R(t)e^{  \displaystyle\mu\left[ \displaystyle \frac{ \overline{\Phi}}{3}T  -\frac{\widetilde{\sigma}}{3} T\right]}=R(t).
\end{equation*}
Then \re{3.33} holds.

From \re{3.222}, we have
\begin{align}\nonumber
 \frac{d R}{d t}\le \mu R(t)\left[\frac{\Phi^{\ast}}{3}-\frac{\widetilde{\sigma}}{3}\right],
\end{align}
which implies
\begin{align}\nonumber
 R(t)\le R(a)e^{ \displaystyle \mu\frac{\Phi^{*}-\widetilde{\sigma}}{3}(t-a)}\le R(a)e^{ \displaystyle \mu\frac{\Phi^{*}-\widetilde{\sigma}}{3}T} \qquad for ~  t\in[a,a+T].
\end{align}
Then \re{3.2} is true.

{\bf Step 2:} We claim
\begin{align}
\liminf _{t \rightarrow+\infty}  R(t)=0. \ \label{3.9}
\end{align}
Assume on the contrary
\begin{align}\nonumber
\liminf _{t \rightarrow+\infty}  R(t)=\alpha>0.
\end{align}
For every $\varepsilon>0$, there exists $M>0$ such that
\begin{align}
R(t)>\alpha-\varepsilon  \qquad for ~    t>M. \ \label{3.188}
\end{align}
From \re{1.15}, the fact that $P_{0}$ is strictly decreasing (\cite{Friedman2006}) implies
\begin{align}\nonumber
 \frac{d R}{d t}= \mu R(t)\left[\Phi(t)P_{0}(R(t))-\frac{\widetilde{\sigma}}{3}\right]\le \mu R(t)\left[\Phi(t)P_{0}(\alpha-\varepsilon)- \frac{\widetilde{\sigma}}{3}\right] \qquad for ~   t>M.
\end{align}
Then
\begin{equation}\label{3.13}
R(t^{\ast}+nT) \le R(t^{\ast})e^{ \displaystyle \int_{t^{\ast}}^{t^{\ast}+nT}\mu\left[\Phi(t)P_{0}(\alpha-\varepsilon)-\frac{\widetilde{\sigma}}{3}\right] d t}=R(t^{\ast})e^{ \displaystyle\mu nT \left[P_{0}(\alpha-\varepsilon)\overline{\Phi}-\frac{\widetilde{\sigma}}{3}\right]},
\end{equation}
where $n\geqslant1$ is an integer and $t^{\ast}>M$.
Notice
\begin{align}\nonumber
P_{0}(\alpha-\varepsilon)\overline{\Phi}-\frac{\widetilde{\sigma}}{3}<\frac{1}{3}\overline{\Phi}-\frac{\widetilde{\sigma}}{3}=0.
\end{align}
Letting $n\rightarrow\infty$ in \eqref{3.13}, we have
\begin{align}\nonumber
 R(t^{\ast}+nT) \rightarrow0.
\end{align}
It contracts with \re{3.188}. Hence \re{3.9} holds.

{\bf Step 3:} We shall prove
\begin{align}
\lim _{t \rightarrow+\infty} R(t)=0. \ \label{3.16}
\end{align}
From \re{3.9}, for all $\varepsilon>0$, there exist a sequence $t_{n}\rightarrow\infty$ and $M>0$ such that
\begin{align}\nonumber
R(t_{n})<\varepsilon \qquad for ~    t_{n}>M.
\end{align}
\re{3.33} implies
\begin{align}\nonumber
R(t_{N}+kT)\le R(t_{N})< \varepsilon,
\end{align}
where $t_{N}>M$ and $k\geqslant1$ is an integer. For $t>t_{N}$, there exists $k_{0}$ such that $t\in[ t_{N}+k_{0}T,t_{N}+(k_{0}+1)T)$. Together with \re{3.2}, we obtain
\begin{align}\begin{array}{ll}\nonumber
R(t) \le R(t_{N}+k_{0}T)e^{ \displaystyle\mu \frac{\Phi^{*}-\widetilde{\sigma}}{3}T} \le \varepsilon  \displaystyle e^{  \displaystyle\mu \frac{\Phi^{*}-\widetilde{\sigma}}{3} T}.
\end{array}
\end{align}
Then \re{3.16} is true, which completes the proof.
\hfill$\square$

\begin{rems}
If $\widetilde{\sigma}>\overline{\Phi}$, the key step (\cite[$(2.4)$]{Bai2013}) tells
\begin{equation*}
R(\xi+nT) \leqslant R(\xi)e^{  \frac{nT}{3}\mu(\overline{\Phi}-\widetilde{\sigma})}\rightarrow 0, \qquad  n\rightarrow\infty.
\end{equation*}
Then 
$R(t)\equiv0$ is globally stable.
If $\widetilde{\sigma}=\overline{\Phi}$, $(2.4)$ in \cite{Bai2013} only shows
\begin{equation*}
R(\xi+nT) \leqslant R(\xi).
\end{equation*}
By this method, one can not get $R(t)$ converges to $0$ as $t\rightarrow\infty$.

When $\widetilde{\sigma}=\overline{\Phi}$, we find good properties \re{3.33} and \re{3.2}, i.e., $R(t+nT)$ is a decreasing function in $n$ and in one period, $R(t)(t\in[a,a+T])$ can be controlled by
$CR(a)$. The two properties and $\liminf\limits_{t \rightarrow+\infty}  R(t)=0$ (\re{3.9} in Step 2) ensure Theorem \ref{1.1} holds.
\end{rems}

\section{Existence, Uniqueness and Stability of the Periodic Solution
}\label{result}
In this section, we shall prove Theorem \ref{thm:1.2}.


\vskip 3mm\noindent
{\it\bf Proof of Theorem \ref{thm:1.2}.}
The facts that $\widetilde{\sigma}<\overline{\Phi}\le\Phi^{\ast}$ , $0<P_{0}(x)<\frac{1}{3}$ and $P_{0}(x)$ is strictly decreasing imply that $x_{2}=P_{0}^{-1}(\frac{\widetilde{\sigma}}{3\Phi^{\ast}})$ and $\overline{x}=\frac{P_{0}^{-1}(\frac{\widetilde{\sigma}}{3\overline{\Phi}})}{e^{\mu\frac{\Phi^{*}-\widetilde{\sigma}}{3}T}}$ are well defined. Since $P_{0}(x)$ is strictly decreasing, it follows
\begin{align}\nonumber
\overline{x}<x_{2}.
\end{align}
For each $R_{0}\in[\overline{x}, x_{2}]$, we let $R(t)$ be the solution of \re{1.15} with the initial value $R(0)=R_{0}$. Define the map $F$: $[\overline{x}, x_{2}]\rightarrow \mathbb{R}$ by
 $$F(R_{0})=R(T).$$
At first, we show that $F$ maps $[\overline{x}, x_{2}]$ into itself.

Since $x_{2}$ is a upper solution of the \re{1.15} and $R(0)\le x_{2}$, the comparison theorem implies
\begin{align}\nonumber
R(t)\le x_{2}  \qquad for ~t>0.
\end{align}
Then
\begin{align}
R(T)\le x_{2}. \ \label{4.3}
\end{align}
On the other hand, we define $\overline{R}$ by
\begin{align}\left\{\begin{array}{ll}
  \dfrac{d \overline{R}}{d t}=\mu \overline{R}(t)\left[\Phi(t) P_{0}(\overline{R}(t))-\displaystyle\frac{\widetilde{\sigma}}{3}\right],\\
  \overline{R}(0)=\overline{x}.
\end{array}\right.
\label{4.5}
\end{align}
By comparison theorem, we obtain
\begin{align}
R(t)\ge \overline{R}(t) \qquad for ~ t>0. \ \label{4.6}
\end{align}
The fact that $0<P_{0}(x)<\frac{1}{3}$ implies
\begin{align}\nonumber
  \frac{d \overline{R}}{d t}=\mu \overline{R}(t)\left[\Phi(t) P_{0}(\overline{R}(t))-\frac{\widetilde{\sigma}}{3}\right]\le\mu \overline{R}(t)\left[ \frac{\Phi^{\ast}}{3}- \frac{\widetilde{\sigma}}{3}\right].
\end{align}
Then
\begin{equation}\nonumber
\overline{R}(t) \leqslant \overline{x} e^{ \displaystyle\mu\frac{\Phi^{*}-\widetilde{\sigma}}{3} t } \leqslant \overline{x} e^{ \displaystyle\mu\frac{\Phi^{*} - \widetilde{\sigma}}{3} T}=P_{0}^{-1}\left(\frac{\widetilde{\sigma}}{3 \overline{\Phi}}\right) \qquad for ~t\in[0,T].
\end{equation}
Since $P_{0}(x)$ is strictly decreasing, it follows
\begin{align}\nonumber
P_{0}(\overline{R}(t))\geqslant \frac{\widetilde{\sigma}}{3\overline{\Phi}} \qquad for~ t\in[0, T].
\end{align}
Together with the first equation of \re{4.5}, we get
\begin{align*}
  \frac{d \overline{R}}{d t}\ge  \mu \overline{R}(t)\left[ \Phi(t)\frac{\widetilde{\sigma}}{3\overline{\Phi}}- \frac{\widetilde{\sigma}}{3}\right] \qquad for ~ t\in(0, T).
\end{align*}
Hence
\begin{align}
\overline{R}(T)\ge \overline{R}(0)e^{\displaystyle \int_{0}^{T}\mu\left[\Phi(t)\frac{\widetilde{\sigma}}{3\overline{\Phi}}-\frac{\widetilde{\sigma}}{3}\right]dt}=\overline{R}(0)=\overline{x}.
\label{4.10}
\end{align}
From \re{4.3}, \re{4.6} and \re{4.10}, we get
$$R(T)\in[\overline{x}, x_{2}].$$
Then $F$ maps $[\overline{x}, x_{2}]$ into itself. Since the solution $R$ depends continuously on the initial value $R_{0}$, it follows that $F$ is continuous. Brouwer's fixed point theorem implies that $F$ has a fixed point $R_{\ast}(0)$. Then the solution $R_{\ast}(t)$ of \re{1.15} with the initial value $R_{\ast}(0)$ is a positive T-periodic solution. So far, we have shown the existence of the periodic solution. The uniqueness of the periodic solution will be given at the end of the proof.

Let
\begin{align}\label{4.7}
R_{min}=\min\limits_{t>0}\left\{R_{\ast}(t)\right\}\qquad \text{and} \qquad R_{max}=\max\limits_{t>0}\left\{R_{\ast}(t)\right\}.
\end{align}
The uniqueness of the solution to the initial value problem implies that $R_{min}>0$ and $R_{max}>0$.

Next we turn to  prove $(ii)$.
Assume that $R(t)$ is the solution of \re{1.15} with the initial value $R(0)>0$.

Let
\begin{align}\nonumber
R(t)=R_{\ast}(t) e^{y(t)}.
\end{align}
Then $y$ satisfies the following equation
\begin{align}\begin{array}{ll}
y'(t) =  \mu\Phi(t)[P_{0}(R_{\ast}(t)e^{y(t)})-P_{0}(R_{\ast}(t))].
\end{array}
\label{4.12}
\end{align}
To prove \re{1.17}, it is sufficient to show that there exist $\delta>0$ and $C>0$ such that
\begin{align}\nonumber
|e^{y(t)}-1|\le C e^{-\delta t}\qquad for ~    t>0.
\end{align}
The uniqueness of the solution to the initial value problem implies that if $R(0)>R _{*}(0)$, then $R(t)>R_{*}(t)$ and if $R(0)<R _{*}(0)$, then $R(t)<R_{*}(t)$. Then $y(t)>0$ if  $y(0)>0$ and $y(t)<0$ if  $y(t)<0$. Hence the arguments  are divided into two cases according to the sign of $y(t)$.

Case A: $y(t)>0$.

From \re{4.12}  and the mean value theorem, we get
\begin{align}\nonumber
y'(t) e^{y(t)}&=  \mu\Phi(t)P_0'(\zeta(t))R_{\ast}(t)(e^{y(t)}-1)e^{y(t)}   \\\nonumber
&\le -\mu\Phi_{\ast}M_{min} R_{min}   (e^{y(t)}-1)\nonumber,
\end{align}
where $\zeta(t)\in[R_{\ast}(t), R_{\ast}(t)e^{y(t)}]\subseteq [R_{min}, R_{max}e^{y(0)}]$ 
and $M_{min}=\min\limits_{x\in [R_{min}, R_{max}e^{y(0)}]}\left\{-P_0'(x)\right\}>0$.

Hence
\begin{align}
\frac{(e^{y(t)}-1)'}{(e^{y(t)}-1)}\le -\mu\Phi_{\ast}M_{min} R_{min} \label{4.1511}.
\end{align}
Integrating \re{4.1511} over $[0,t]$, we have
\begin{align}
e^{y(t)}-1\le (e^{y(0)}-1)e^{-\mu\Phi_{\ast}M_{min} R_{min} t}\qquad for ~    t>0\label{4.15111}.
\end{align}

Case B: $y(t)<0$.

From \re{4.12} and the fact that $P_{0}$ is strictly decreasing, we get that $y'(t)>0$. Combining the mean value theorem, we have
\begin{align}\nonumber
-y'(t) e^{y(t)}&=  -\mu\Phi(t)P_0'(\eta(t))R_{\ast}(t)(e^{y(t)}-1)e^{y(t)}   \\\nonumber
&\le  -\mu\Phi_{\ast} M_{min}R_{min}   (1-e^{y(t)})e^{y(0)}\nonumber,
\end{align}
where $\eta(t)\in[R_{\ast}(t)e^{y(t)}, R_{\ast}(t)]\subseteq [R_{min}e^{y(0)}, R_{max}]$. 

Hence
\begin{align}
\frac{(1-e^{y(t)})'}{(1-e^{y(t)})}\le -\mu\Phi_{\ast}  M_{min}R_{min}e^{y(0)}\label{4.15122}.
\end{align}
Integrating \re{4.15122} over $[0,t]$, we have
\begin{align}
1-e^{y(t)}\le (1-e^{y(0)})e^{-\mu\Phi_{\ast} M_{min}R_{min} e^{y(0)}t}\qquad for ~    t>0\label{4.1511133}.
\end{align}
Taking $\delta=\min\{\mu\Phi_{\ast} M_{min}R_{min} , \mu\Phi_{\ast}M_{min} R_{min} e^{y(0)}\}$ and $C=|1-e^{y(0)}|$.   \re{4.15111} and \re{4.1511133} imply \re{1.17}.

Finally, we show that the solution $R_{\ast}(t)$ is unique. Otherwise, by \re{1.17}, we obtain

$$
\left|R_{*}^{1}(t)-R_{*}^{2}(t)\right|\leqslant\left|R(t)-R_{*}^{1}(t)\right|+\left|R(t)-R_{*}^{2}(t)\right| \rightarrow 0 \qquad t\rightarrow \infty.
$$
Hence $R_{*}^{1}(t)=R_{*}^{2}(t)$, which completes the proof.\hfill$\square$

\begin{rems}
Bai and Xu (\cite{Bai2013}) applied Brouwer's fixed point theorem to show the existence of periodic solution. They need the condition $\widetilde{\sigma}<\Phi_{\ast}$ to construct sub-solution
$x_{1}=P_{0}^{-1}\left(\frac{\widetilde{\sigma}}{3 \Phi_{\ast}}\right)$. To get rid of the condition $\widetilde{\sigma}<\Phi_{\ast}$, we use a sub-solution $\overline{R}(t)$ (given in  \re{4.5}) to replace $x_{1}$.
\end{rems}

\begin{rems}
The proof of Theorem \ref{thm:1.2} is still valid for two-space dimensional problem. The existence scope $\widetilde{\sigma}\in\left(0, \Phi_{*}\right)$ of periodic solution in \cite[Theorem $2.1$]{Huang2019} can be extended to $\widetilde{\sigma}\in(0, \overline{\Phi})$.
\end{rems}

\section{ Linear stability of the periodic
solution under non-radially symmetric perturbations}\label{result}
In this section, we consider linear stability of the unique radially symmetric T-periodic positive solution $ \left(\sigma_{*}(r, t), p_{*}(r, t), R_{*}(t)\right)$ obtained in Corollary \ref{cor:1.3333} under non-radially symmetric perturbations.

Substituting
\begin{equation}\nonumber
\begin{aligned} \partial \Omega(t) : r &=R_{*}(t)+\varepsilon \rho(\theta, \phi, t)+O(\varepsilon^{2}), \\
\sigma(r, \theta, \phi, t) &=\sigma_{*}(r, t)+\varepsilon w(r, \theta, \phi,  t)+O(\varepsilon^{2}), \\
p(r, \theta, \phi, t) &=p_{*}(r, t)+\varepsilon q(r, \theta, \phi, t)+O(\varepsilon^{2})
\end{aligned}
\end{equation}
into \re{1.1}--\re{1.6} and collecting the $\varepsilon$-order terms, we can get  the linearized system of \re{1.1}--\re{1.6}  at the radially symmetric T-periodic solution $ \left(\sigma_{*}(r, t), p_{*}(r, t), R_{*}(t)\right)$.

\re{1.3} and \cite{Fontelos2003,Friedman2006,Friedman2008} imply
\begin{align}\nonumber
V_{\mathrm{n}}=R_{*}^{\prime}(t)+\varepsilon \rho_{t}+O(\varepsilon^{2}) \ \ and \ \
\kappa =
\frac{1}{R_{*}(t)}-\frac\e{R_{*}^2(t)}\Big(\rho+\frac12\Delta_\o\rho\Big)+O(\e^2).
\end{align}
Then the linearized system has the following form:
\begin{align} \nonumber
&\Delta w= w  &&r\in(0,R_{*}(t)), t>0, \\ \nonumber
&w(R_{*}(t),\theta,\phi,t)= - \frac{\partial \sigma_{*}}{\partial r}(R_{*}(t),t)\rho(\theta,\phi,t)  && t>0,\\\nonumber
&-\Delta q=\mu w  &&r\in(0,R_{*}(t)), t>0,\\\nonumber
&q(R_{*}(t),\theta,\phi,t)=- \frac{\partial p_{*}}{\partial r}(R_{*}(t),t) \rho(\theta, \phi, t)-\; \frac\gamma{R^2_{*}(t)}\Big(\rho+\frac12\Delta_\o\rho\Big) && t>0,\\\nonumber
&\frac{d \rho}{d t}=- \frac{\partial^{2} p_{*}}{\partial r^{2}}(R_{*}(t),t) \rho(\theta, \phi, t)-\frac{\partial q}{\partial r}(R_{*}(t),t)&& t>0.
\end{align}


At first, we give some preliminaries.

The Bessel function (\cite{Watson1995}) is given by
\begin{align}\nonumber
I_n(r) = \Big(\frac r2\Big)^n
\sum_{k=0}^\infty\frac1{k!\G(n+k+1)}\Big(\frac r2\Big)^{2k},
\end{align}
and has the following properties:
\begin{align} \nonumber
&\Big( \frac{\partial^2}{\partial r^2} + \frac2r\frac\partial{\partial
 r}-\frac{n(n+1)}{r^2} \Big)\frac{I_{n+1/2}(r)}{r^{1/2}} =
 \frac{I_{n+1/2}(r)}{r^{1/2}},\\\label{B.9}
&\Big( \frac{d}{dr} - \frac{n}r\Big)
 \frac{I_{n+1/2}(r)}{r^{1/2}} = \frac{I_{n+3/2}(r)}{r^{1/2}}.
 \end{align}

A useful function $P_n$  (\cite{Friedman2006}) is given by
\begin{align}\nonumber
P_n(r) = \frac{I_{n+3/2}(r)}{r I_{n+1/2}(r)}\mm
n=0,1,2,3,\cdots,
\end{align}
and has the following properties:
\begin{align}\nonumber
& P_0(r) =  r^{-1} \coth r- r^{-2},\\\nonumber
&\frac{d P_n}{dr}(r)<0\qquad r>0,\\\label{2.8}
&0<P_n(r)\le\frac{1}{2n+3}\qquad r\ge0,\\\label{eq1}
&P_0(r)=\frac{1}{r^{2}P_1(r)+3},\\
\label{2.10}
& P_n(r)>P_{n+1}(r)\qquad \forall n\geqslant 0,r>0.
 \end{align}


\begin{lem}
 The following relations hold:
\begin{align} \label{2.23d}
&\frac{\partial \sigma_*}{\partial r}(R_*(t),t)=\Phi(t)R_*(t)P_{0} (R_*(t)),\\\label{2.24d}
&\frac{\partial^{2} \sigma_*}{\partial r^{2}}(R_*(t),t)=\Phi(t)[1-2P_{0} (R_*(t))],\\
\label{2.25fd}
&\frac{\partial p_*}{\partial r}(R_*(t),t)=-\frac{d R_*(t)}{d t},\\\label{2.26d}
&\frac{\partial^{2} p_*}{\partial r^{2}}(R_*(t),t)=-\frac{1}{ R_*(t)} \frac{d R_*}{d t}   - \mu\Phi(t)R^2_*(t) P_{0}(R_*(t))P_{1}(R_*(t)) .
\end{align}
\end{lem}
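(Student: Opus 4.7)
All four identities reduce to direct computations starting from the explicit formulas \eqref{1.13}, \eqref{1.14} for $\sigma_*$ and $p_*$ together with the PDEs \eqref{1.7}, \eqref{1.8}, the ODE \eqref{1.15}, and the algebraic identity \eqref{eq1} relating $P_0$ and $P_1$. The plan is to treat the four formulas in the order listed and, whenever possible, avoid differentiating $\sinh r/r$ twice by using the differential equations in radial form.

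First, for \eqref{2.23d}, I would differentiate \eqref{1.13} in $r$, yielding
\[
\frac{\partial \sigma_*}{\partial r}(r,t)=\Phi(t)\frac{R_*(t)}{\sinh R_*(t)}\cdot\frac{r\cosh r-\sinh r}{r^2},
\]
and then set $r=R_*(t)$ and recognize the resulting quotient as $R_*(t)\cdot\frac{R_*(t)\coth R_*(t)-1}{R_*^{2}(t)}=R_*(t)P_0(R_*(t))$. For \eqref{2.24d}, I would avoid a second differentiation and instead use \eqref{1.7} written in radial form at $r=R_*(t)$, namely
\[
\frac{\partial^{2}\sigma_*}{\partial r^{2}}(R_*(t),t)+\frac{2}{R_*(t)}\frac{\partial \sigma_*}{\partial r}(R_*(t),t)=\sigma_*(R_*(t),t)=\Phi(t),
\]
and substitute \eqref{2.23d} into it.

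For \eqref{2.25fd} I would simply invoke the free boundary condition \eqref{1.9}. For \eqref{2.26d}, I would similarly evaluate \eqref{1.8} in radial form at $r=R_*(t)$, obtaining
\[
\frac{\partial^{2}p_*}{\partial r^{2}}(R_*(t),t)=-\mu(\Phi(t)-\widetilde\sigma)-\frac{2}{R_*(t)}\frac{\partial p_*}{\partial r}(R_*(t),t)=-\mu(\Phi(t)-\widetilde\sigma)+\frac{2}{R_*(t)}\frac{dR_*}{dt}.
\]
Using \eqref{1.15} to rewrite $2R_*'/R_*=2\mu[\Phi(t)P_0(R_*)-\widetilde\sigma/3]$ and collecting terms gives
\[
\frac{\partial^{2}p_*}{\partial r^{2}}(R_*(t),t)=-\mu\Phi(t)\bigl(1-2P_0(R_*(t))\bigr)+\frac{\mu\widetilde\sigma}{3}.
\]
The target expression, on the other hand, equals $-\mu\Phi(t)P_0(R_*)-\mu\Phi(t)R_*^{2}P_0(R_*)P_1(R_*)+\mu\widetilde\sigma/3$ after another use of \eqref{1.15} for the $R_*'/R_*$ term. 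Matching the two expressions amounts to verifying $1-3P_0(R_*)=R_*^{2}\,P_0(R_*)P_1(R_*)$, which is exactly \eqref{eq1} rewritten as $P_0(3+R_*^{2}P_1)=1$.

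The main obstacle is the last step: without the algebraic identity \eqref{eq1} the expression for $\partial_r^{2} p_*|_{r=R_*}$ produced by the PDE does not visibly coincide with the one the lemma states, so the bookkeeping between \eqref{1.8}, \eqref{1.15}, and \eqref{eq1} must be carried out carefully. Everything else is direct differentiation and substitution.
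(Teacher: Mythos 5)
Your proposal is correct and follows essentially the same route as the paper: \eqref{2.25fd} comes straight from \eqref{1.9}, and \eqref{2.26d} is obtained by reducing to the intermediate expression $\frac{1}{3}\mu\widetilde\sigma-\mu\Phi(t)[1-2P_0(R_*(t))]$ and then invoking \eqref{1.15} together with \eqref{eq1} in the form $R_*^2P_0P_1=1-3P_0$, exactly as the paper does. The only cosmetic difference is that you derive \eqref{2.24d} and the intermediate expression for $\partial_r^2p_*$ from the radial equations \eqref{1.7}--\eqref{1.8} evaluated at $r=R_*(t)$, whereas the paper differentiates the explicit formulas \eqref{1.13} and \eqref{1.1992} (citing Friedman et al.\ for the $\sigma_*$ identities); both are direct computations yielding the same result.
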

\begin{proof} The proofs of \eqref{2.23d}--\eqref{2.24d} are similar to  \cite[Lemma 2.1]{Friedman2008}, we omit them.
It remains to show that \eqref{2.25fd}--\eqref{2.26d} hold.

\re{1.9} implies \re{2.25fd}.  From \re{1.1992} and \re{2.24d}, we obtain
\begin{align}\nonumber
\frac{\partial^{2} p_*}{\partial r^{2}}(R_*(t),t)=\frac{1}{3} \mu \widetilde{\sigma}- \mu\Phi(t)[1-2P_{0}(R_*(t))].
\end{align}
Together with \re{1.15} and \re{eq1}, we have
\begin{align*}
\frac{\partial^{2} p_*}{\partial r^{2}}(R_*(t),t)
&= - \frac{1}{ R_*(t)} \frac{d R_*}{d t}   - \mu\Phi(t)[1-3P_{0}(R_*(t))]\\
&=- \frac{1}{ R_*(t)} \frac{d R_*}{d t}   - \mu\Phi(t)R^2_*(t) P_{0}(R_*(t))P_{1}(R_*(t))  .
\end{align*}
Then \re{2.26d} is true.
%
\end{proof}

%
%

Let
\begin{align} \nonumber
  \rho(\theta,\phi,t)  &= \sum_{n=0}^{+\infty} \sum_{m=-n}^{n}\rho_{n,m}(t) Y_{n,m}(\theta,\phi),\\\nonumber
  w(r,\theta,\phi,t)& =  \sum_{n=0}^{+\infty} \sum_{m=-n}^{n} w_{n,m}(r,t)Y_{n,m}(\theta,\phi),\\\nonumber
  q(r,\theta,\phi,t) & =   \sum_{n=0}^{+\infty} \sum_{m=-n}^{n} q_{n,m}(r,t)Y_{n,m}(\theta,\phi),
\end{align}
where the spherical harmonic function
\begin{align}\nonumber
 Y_{n,m}(\theta,\phi) = (-1)^m\sqrt{\frac{(2n+1)(n-m)!}{2(n+m)!}}
 \;P^m_n(\cos\theta) \frac{e^{im\phi}}{\sqrt{2\pi}} \m
 (m=-n,\cdots,n)
\end{align}
in $\bR^3$, where
$$
 P^m_n(z) = \frac1{2^nn!} (1-z^2)^{m/2}
 \frac{d^{n+m}}{dz^{n+m}}(z^2-1)^n
$$
is the Legendre polynomial.
$\{ Y_{n,m}\}$ is a complete orthonormal basis for $L^2(\mathbb{S}^2)$. 

Applying the relation (\cite{Friedman2006})
\begin{align}\nonumber
 \Delta_\o  Y_{n,m}(\theta,\phi) + n(n+1)  Y_{n,m}(\theta,\phi) =0,
\end{align}
we have
\begin{align} \label{5.13}
&\frac{\partial^2 w_{n, m}}{\partial r^2}(r, t)+\frac{2}{r} \frac{\partial w_{n, m}}{\partial r} (r, t)-\left(\frac{n(n+1)}{r^{2}}+1\right) w_{n, m}(r, t)=0  &&r\in(0,R_{*}(t)), t>0, \\ \label{5.14}
&w_{n, m}(R_{*}(t),\theta,\phi,t)= - \frac{\partial \sigma_{*}}{\partial r}(R_{*}(t),t)\rho_{n, m}(t)  && t>0,\\ \label{5.15}
& \frac{\partial^2 q_{n, m}}{\partial r^2}(r, t)+\frac{2}{r}  \frac{\partial q_{n, m}}{\partial r} (r, t)-\frac{n(n+1)}{r^{2}} q_{n, m}(r, t)=-\mu w_{n, m}(r, t) &&r\in(0,R_{*}(t)), t>0,\\ \label{5.16}
& q_{n, m}\left(R_{*}(t), t\right)=\left(\frac{n(n+1)}{2}-1\right)\frac{\gamma\rho_{n, m}(t)}{R_{*}^{2}(t)} -\frac{\partial p_{*}}{\partial r}(R_{*}(t),t)\rho_{n, m}(t) && t>0,\\ \label{5.17}
& \frac{d \rho_{n, m}}{d t}(t)= -\frac{\partial^{2} p_{*}}{\partial r^{2}}(R_{*}(t),t) \rho_{n, m}(t)- \frac{\partial q_{n, m}}{\partial r} (R_{*}(t),t)                                      && t>0.
\end{align}
The solution $w_{n,m}$ of \re{5.13}--\re{5.14} is given by
\begin{align}\nonumber
w_{n,m}(r,t) = -\frac{\partial \sigma_{*}}{\partial r}(R_{*}(t),t)
 \frac{R_{*}^{1/2}(t)}{I_{n+1/2}(R_{*}(t))}\frac{I_{n+1/2}(r)}{r^{1/2}}\rho_{n,m}(t).
\end{align}
Define
\begin{align}\nonumber\psi_{n,m}=q_{n,m}+\mu w_{n,m}.\end{align}
From \re{5.13}--\re{5.16}, we obtain
\begin{align*} 
&\frac{\partial^2 \psi_{n, m}}{\partial r^2}(r, t)+\frac{2}{r} \frac{\partial \psi_{n, m}}{\partial r}(r, t)-\frac{n(n+1)}{r^{2}} \psi_{n, m}(r, t)=0  \qquad r\in(0,R_{*}(t)), \\ 
& \psi_{n, m}\left(R_{*}(t), t\right)= \left(\frac{n(n+1)}{2}-1\right)\frac{\gamma\rho_{n, m}(t)}{R_{*}^{2}(t)}-  \frac{\partial p_{*}}{\partial r} (R_{*}(t),t)\rho_{n, m}(t) -\mu\frac{\partial \sigma_{*}}{\partial r}(R_{*}(t),t) \rho_{n, m}(t).
\end{align*}
The solution of the above problem is
\begin{align*}
{\psi_{n, m}\left(r, t\right)= \frac{r^n}{R_{*}^{n}(t)}\Big[\left(\frac{n(n+1)}{2}-1\right)\frac{\gamma\rho_{n, m}(t)}{R_{*}^{2}(t)}- \frac{\partial p_{*}}{\partial r}(R_{*}(t),t) \rho_{n, m}(t)}- \mu\frac{\partial \sigma_{*}}{\partial r}(R_{*}(t),t) \rho_{n, m}(t)\Big].
\end{align*}
Then
\begin{align}\nonumber
q_{n, m}(r, t)=&\frac{r^n}{R_{*}^{n}(t)}\Big[\left(\frac{n(n+1)}{2}-1\right)\frac{\gamma\rho_{n, m}(t)}{R_{*}^{2}(t)}- \frac{\partial p_{*}}{\partial r}(R_{*}(t),t)  \rho_{n, m}(t)- \mu\frac{\partial \sigma_{*}}{\partial r}(R_{*}(t),t)  \rho_{n, m}(t)\Big]\\
&\qquad +\mu \frac{\partial \sigma_{*}}{\partial r}(R_{*}(t),t) \frac{R_{*}^{1/2}(t)}{I_{n+1/2}(R_{*}(t))} \frac{I_{n+1/2}(r)}{r^{1/2}}\rho_{n,m}(t).
\nonumber
\end{align}
Differentiating the above equation in $r$, using \re{B.9} and taking $r=R_{*}(t)$, we obtain
\begin{align}\nonumber
\frac{\partial q_{n,m}}{\partial r}(R_{*}(t),t) =&\frac{n}{R_{*}(t)}\Big[\left(\frac{n(n+1)}{2}-1\right)\frac{\gamma\rho_{n, m}(t)}{R_{*}^{2}(t)}- \frac{\partial p_{*}}{\partial r}(R_{*}(t),t)  \rho_{n, m}(t)- \mu\frac{\partial \sigma_{*}}{\partial r}(R_{*}(t),t)  \rho_{n, m}(t)\Big]\\\nonumber
&\quad+\mu \frac{\partial \sigma_{*}}{\partial r}(R_{*}(t),t) \left[ \frac{n}{R_{*}(t)} + \frac{I_{n+3/2}(R_{*}(t))}{I_{n+1/2}(R_{*}(t))} \right] \rho_{n,m}(t)\\\nonumber
 =&\frac{n}{R_{*}(t)}\Big[\left(\frac{n(n+1)}{2}-1\right)\frac{\gamma\rho_{n, m}(t)}{R_{*}^{2}(t)}- \frac{\partial p_{*}}{\partial r}(R_{*}(t),t)  \rho_{n, m}(t)\Big]\\\nonumber
&\quad+\mu \frac{\partial \sigma_{*}}{\partial r}(R_{*}(t),t) R_{*}(t) P_n(  R_{*}(t))     \rho_{n,m}(t).
\end{align}
Plugging \re{2.23d} and \re{2.25fd} into the above equation, we obtain
\begin{align}\nonumber
&\frac{\partial q_{n,m}}{\partial r}(R_{*}(t),t)\\\label{4.35}
&  \quad=\Big\{
  \frac{n}{R_{*}(t)}\Big[\frac\gamma{R^2_{*}(t)}\Big(\frac{n(n+1)}2-1\Big)+\frac{d R_{*}}{d t}\Big]+ \mu \Phi(t)R^{2}_{*}(t)P_{0} (R_{*}(t))P_{n} (R_{*}(t))\Big\}\rho_{n, m}(t).
\end{align}
Substituting \re{2.26d} and \re{4.35} into \re{5.17}, we have
\begin{align}\nonumber
\frac{d \rho_{n, m}}{d t}\!=\!-\!\left\{\frac{d R_{*}}{d t}\frac{n\!-\!1}{R_{*}(t)}\!+\!\frac{\gamma n}{R^{3}_{*}(t)}\Big(\frac{n(n+1)}{2}\!-\!1\Big)\!-\!\mu \Phi(t) R_{*}^{2}(t) P_{0}(R_{*}(t))[P_{1}(R_{*}(t))\!-\!P_{n}(R_{*}(t))] \right\}\rho_{n, m}(t).    
\end{align}
Hence,
\begin{align}\nonumber
&\rho_{n, m}(t)\\\label{5.26}
&\!=\!\rho_{n, m}(0)\!\exp\! \Big\{\!\!-\!\!\int_{0}^{t}\frac{d R_{*}}{d t}\frac{n\!-\!1}{R_{*}(t)}\!+\!\frac{\gamma n}{R^{3}_{*}(t)}\Big(\frac{n(n+1)}{2}\!-\!1\Big)\!-\!\mu \Phi(t) R_{*}^{2}(t) P_{0}(R_{*}(t))[P_{1}(R_{*}(t))\!-\!P_{n}(R_{*}(t))]dt\Big\}.
\end{align}
At first,  we give an estimate for $\rho_{0}(t)$.
\begin{lem}\label{lem:5.2}
For any $\mu>0$, there exist $\delta>0$ and $M$ such that
\begin{equation}\nonumber
\left|\rho_{0}(t)\right| \leqslant \left|\rho_{0}(0)\right| e^{-\delta t} \qquad \text { for }  t>M.
\end{equation}
\end{lem}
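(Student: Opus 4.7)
The plan is to specialize the explicit formula \eqref{5.26} to $n=0$ and analyze the exponent. For $n=0$, the middle term in the integrand carries the factor $\gamma n=0$ and hence drops out, the coefficient $(n-1)/R_*(t)$ becomes $-1/R_*(t)$, and the bracket $P_1(R_*(t))-P_n(R_*(t))$ becomes $P_1(R_*(t))-P_0(R_*(t))$. Thus
\[
\rho_{0}(t)=\rho_{0}(0)\exp\!\left\{\int_{0}^{t}\frac{R_*'(s)}{R_*(s)}\,ds\;-\;\int_{0}^{t}\mu\Phi(s)R_*^{2}(s)P_0(R_*(s))\bigl[P_0(R_*(s))-P_1(R_*(s))\bigr]\,ds\right\}.
\]

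The first integral is exactly $\ln\bigl(R_*(t)/R_*(0)\bigr)$. Since $R_*$ is positive and $T$-periodic, $R_{\min}\le R_*(t)\le R_{\max}$ (recall \eqref{4.7}), so this factor contributes the bounded multiplicative term $R_*(t)/R_*(0)$, uniformly bounded above by $R_{\max}/R_*(0)$.

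The decisive contribution comes from the second integral. Setting
\[
g(s)=\mu\Phi(s)R_*^{2}(s)P_0(R_*(s))\bigl[P_0(R_*(s))-P_1(R_*(s))\bigr],
\]
the strict monotonicity ordering \eqref{2.10} gives $P_0(R_*(s))-P_1(R_*(s))>0$, and $\Phi,R_*$ are positive continuous $T$-periodic functions bounded away from $0$, so $g$ is itself continuous, $T$-periodic, and satisfies $g(s)\ge g_0$ for some $g_0>0$. Consequently $\int_0^t g(s)\,ds\ge g_0 t$, and more sharply, setting $c=\tfrac{1}{T}\int_0^T g(s)\,ds>0$, we have $\int_0^t g(s)\,ds=c\,t+O(1)$ as $t\to\infty$.

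Combining these two pieces yields $|\rho_{0}(t)|\le \tfrac{R_{\max}}{R_*(0)}\,|\rho_{0}(0)|\,e^{-c t + O(1)}$. Choosing any $\delta\in(0,c)$, one can absorb the bounded prefactor into the exponent for all sufficiently large $t$, i.e.\ there exists $M>0$ such that $\tfrac{R_{\max}}{R_*(0)}\,e^{(\delta-c)t+O(1)}\le 1$ for $t>M$, which gives the claimed estimate $|\rho_{0}(t)|\le |\rho_{0}(0)|e^{-\delta t}$ for $t>M$. There is no serious obstacle: the only thing to verify carefully is the strict positivity and periodicity of $g$, which are immediate from \eqref{2.10}, the positivity and periodicity of $\Phi$, and the fact (already established in Theorem \ref{thm:1.2}) that $R_*$ stays inside a compact interval contained in $(0,\infty)$.
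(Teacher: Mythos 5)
Your proposal is correct and follows essentially the same route as the paper: both specialize \eqref{5.26} to $n=0$, identify the first term of the exponent as $\ln\bigl(R_*(t)/R_*(0)\bigr)$ (a bounded factor since $R_{\min}\le R_*\le R_{\max}$), bound the remaining integrand below by a positive constant using the strict inequality $P_0>P_1$ from \eqref{2.10} and the compactness of the range of $R_*$, and then absorb the bounded prefactor by shrinking the decay rate for $t$ large. The only cosmetic difference is that the paper organizes the lower bound on the integral via the periodic decomposition $t=mT+\tau$ with the constant $\widetilde{\delta}=\min_{x\in[R_{\min},R_{\max}]}x^{2}P_0(x)[P_0(x)-P_1(x)]$, whereas you use a direct pointwise lower bound $g(s)\ge g_0$; these are equivalent.
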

\begin{proof}
Plugging $n=0$ into \re{5.26}, we have
\begin{align}\nonumber
\rho_{0}(t)
&=\rho_{0}(0) \exp \Big\{-\int_{0}^{t}-\frac{d R_{*}(t)}{d t}\frac{1}{R_{*}(t)}+\mu \Phi(t) R_{*}^{2}(t) P_{0}(R_{*}(t))[P_{0}(R_{*}(t))-P_{1}(R_{*}(t))]dt\Big\}\\\nonumber
&=\rho_{0}(0)\frac{R_{*}(t)}{R_{*}(0)}\exp \Big\{-  \int_{0}^{t}  \mu \Phi(t) R_{*}^{2}(t) P_{0}(R_{*}(t))[P_{0}(R_{*}(t))-P_{1}(R_{*}(t))]dt\Big\}.
\end{align}
From \re{2.10}, 
we get
\begin{equation}\label{5.29}
\mu \Phi(t) R_{*}^{2}(t) P_{0}(R_{*}(t))[P_{0}(R_{*}(t))-P_{1}(R_{*}(t))]\geqslant0.
\end{equation}
For any $t > T$, there exist a positive integer $m$ and $\tau \in[0, T)$ such that $t=m T+\tau$. The fact that $R_{*}(t)$ and $\Phi(t) $ are T-periodic and \re{5.29} imply
\begin{align}\nonumber
\left|\rho_{0}(t)\right|
&=\left|\rho_{0}(0)\right|\frac{R_{*}(t)}{R_{*}(0)}\exp\Big\{-(\int_{0}^{mT}+\int_{mT}^{mT+\tau})\mu \Phi(t) R_{*}^{2}(t)P_{0}(R_{*}(t))[P_{0}(R_{*}(t))-P_{1}(R_{*}(t))]dt\Big\}\\\nonumber
&\leqslant\left|\rho_{0}(0)\right|\frac{R_{*}(\tau)}{R_{*}(0)}\exp\Big\{- m\int_{0}^{T}\mu \Phi(t) R_{*}^{2}(t) P_{0}(R_{*}(t))[P_{0}(R_{*}(t))-P_{1}(R_{*}(t))]dt\Big\}\\\nonumber
& \le \left|\rho_{0}(0)\right|\frac{R_{*}(\tau)}{R_{*}(0)}\exp\Big\{-\mu\overline{\Phi}\widetilde{\delta}mT  \Big\}\\\nonumber
& \le \left|\rho_{0}(0)\right|\frac{R_{max}}{R_{min}}\exp\Big\{-\mu\overline{\Phi}\widetilde{\delta}(t-T)  \Big\}\\
& =\left|\rho_{0}(0)\right|\frac{R_{max}}{R_{min}}e^{\mu\overline{\Phi}\widetilde{\delta}T}e^{-\mu\overline{\Phi}\widetilde{\delta}t},
\nonumber
\end{align}
where $R_{min}=\min\limits_{t>0}\left\{R_{\ast}(t)\right\}$, $R_{max}=\max\limits_{t>0}\left\{R_{\ast}(t)\right\}$ and  $\widetilde{\delta}=\min\limits_{x\in [R_{min}, R_{max}]}\{x^{2} P_{0}(x)[P_{0}(x)-P_{1}(x)]\}>0$.

Then there exists $M>0$ such that
\begin{equation}\nonumber
\left|\rho_{0}(t)\right|\leqslant\left|\rho_{0}(0)\right|e^{-\frac{\mu\overline{\Phi}\widetilde{\delta}}{2}t} \qquad \text { for }  t>M.
\end{equation}
Taking $\delta=\frac{\mu\overline{\Phi}\widetilde{\delta}}{2}$, we complete the proof.
\end{proof}

\vskip 2mm

Next, we give an estimate for $\rho_{1}(t)$.
\begin{lem}\label{lem:5.3}
For any $\mu>0$, we have
\begin{equation}\label{5.31}
\rho_{1, m}(t)=\rho_{1, m}(0).
\end{equation}
\end{lem}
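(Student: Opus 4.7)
The plan is to substitute $n=1$ directly into the explicit formula \re{5.26} and verify that the integrand vanishes identically.

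First I would isolate the three contributions appearing under the integral in \re{5.26}: namely $\frac{dR_*}{dt}\cdot\frac{n-1}{R_*(t)}$, the curvature term $\frac{\gamma n}{R_*^3(t)}\bigl(\frac{n(n+1)}{2}-1\bigr)$, and the Bessel-type term $\mu\Phi(t)R_*^2(t)P_0(R_*(t))[P_1(R_*(t))-P_n(R_*(t))]$. Setting $n=1$ kills the first factor, since $n-1=0$; the second factor is annihilated by $\frac{n(n+1)}{2}-1=\frac{1\cdot 2}{2}-1=0$; and the third factor vanishes tautologically because $P_1(R_*(t))-P_1(R_*(t))=0$.

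Therefore the entire exponent in \re{5.26} equals zero for $n=1$, and the exponential evaluates to $1$, yielding $\rho_{1,m}(t)=\rho_{1,m}(0)$ for every $t\ge 0$ and every $\mu>0$.

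There is no genuine obstacle here — the result is a direct algebraic consequence of the fact that $n=1$ is a simultaneous zero of the three coefficient expressions in \re{5.26}. The only thing worth remarking on (and which should be mentioned in the proof to motivate the subsequent mode-by-mode stability analysis) is that this degeneracy is exactly the reflection of translational invariance of the free boundary problem \re{1.1}--\re{1.6}: the $n=1$ spherical harmonics correspond to infinitesimal rigid translations of the tumor, which preserve the shape and hence are neutrally stable perturbations. No exponential decay or growth can occur in this mode regardless of the value of $\mu$.
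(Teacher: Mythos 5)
Your proposal is correct and coincides with the paper's own proof, which likewise just substitutes $n=1$ into \re{5.26}; you merely spell out explicitly that the factors $n-1$, $\frac{n(n+1)}{2}-1$, and $P_1-P_n$ all vanish at $n=1$, which the paper leaves implicit. The closing remark about translational invariance is a correct and standard interpretation, though not part of the paper's argument.
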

\begin{proof}
Substituting $n=1$ into \re{5.26}, we get that \re{5.31} holds.\end{proof}

\vskip 2mm

At last,  we give an estimate for $\rho_{n}(t)(n\ge2)$.

Define
\begin{align}\label{4.44}
\vartheta_{n}=\frac{\int_{0}^{T}\frac{\gamma n}{R^{3}_{*}(t)}\left(\frac{n(n+1)}{2}-1\right)dt}{\int_{0}^{T} \Phi(t)  R_{*}^{2}(t) P_{0}(R_{*}(t)){[P_{1}(R_{*}(t))-P_{n}(R_{*}(t))]}dt} \qquad n\ge2.
\end{align}

\begin{lem}\label{lem:1}
  For $n\geqslant 2$, $ \vartheta_{n} <\vartheta_{n+1}$.
\end{lem}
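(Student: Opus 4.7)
My plan is to reduce the inequality $\vartheta_n<\vartheta_{n+1}$ to a pointwise inequality on the Bessel-type function $P_n$. Using the identity $\tfrac{n(n+1)}{2}-1=\tfrac{(n-1)(n+2)}{2}$, I first rewrite
$$\vartheta_n=\frac{\gamma\,A\cdot n(n-1)(n+2)/2}{D_n},\qquad A:=\int_0^T R_*^{-3}(t)\,dt,\quad D_n:=\int_0^T g(t)\,[P_1(R_*)-P_n(R_*)]\,dt,$$
where $g(t):=\Phi(t)R_*^2(t)P_0(R_*(t))>0$ on $[0,T]$. Cross-multiplying and cancelling the common positive factor $\gamma A\cdot n/2$, the desired inequality $\vartheta_n<\vartheta_{n+1}$ becomes $(n-1)(n+2)D_{n+1}<(n+1)(n+3)D_n$, which (collecting both sides under one integral) is equivalent to $\int_0^T g(t)\,h_n(R_*(t))\,dt>0$ with
$$h_n(r):=(3n+5)\bigl(P_1(r)-P_n(r)\bigr)-(n-1)(n+2)\bigl(P_n(r)-P_{n+1}(r)\bigr).$$
Because $g>0$, it suffices to prove the pointwise estimate $h_n(r)>0$ for all $r>0$ and $n\ge 2$.

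To prove the pointwise bound I intend to exploit the three-term Bessel recurrence $I_{\nu-1}(z)-I_{\nu+1}(z)=(2\nu/z)I_\nu(z)$ with $\nu=n+\tfrac{3}{2}$, which translates into the identity
$$z^2 P_n(z)P_{n+1}(z)=1-(2n+3)P_n(z),\qquad z>0,\ n\ge 0,$$
generalising \eqref{eq1}. Applying this recursion at indices $n-1$ and $n$, I will prove the discrete convexity $P_{n-1}(r)+P_{n+1}(r)\ge 2P_n(r)$ for $r>0$ and $n\ge 1$; this makes the sequence $(P_k(r)-P_{k+1}(r))_{k\ge 1}$ decreasing in $k$. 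Telescoping then gives $P_1(r)-P_n(r)\ge (n-1)\bigl(P_n(r)-P_{n+1}(r)\bigr)$, so that
$$h_n(r)\ge\bigl[(3n+5)(n-1)-(n-1)(n+2)\bigr]\bigl(P_n-P_{n+1}\bigr)=(n-1)(2n+3)\bigl(P_n(r)-P_{n+1}(r)\bigr),$$
which is strictly positive for $n\ge 2$ by \eqref{2.10}. Integrating against $g$ completes the lemma.

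The main obstacle is proving the discrete convexity of $P_n(r)$ in $n$. The boundary checks are encouraging: at $r=0$ one has $P_n(0)=1/(2n+3)$ and a direct computation gives $P_{n-1}(0)+P_{n+1}(0)-2P_n(0)=8/[(2n+1)(2n+3)(2n+5)]>0$, while as $r\to\infty$ the asymptotic expansion $P_n(r)\sim r^{-1}-(n+1)r^{-2}+\tfrac{n(n+1)}{2}r^{-3}$ (whose coefficients one can derive inductively from the recursion) shows the second difference behaves like $r^{-3}>0$ to leading order. A rigorous global argument will require using the recursion to express $P_{n-1}=1/(r^2 P_n+2n+1)$ and $P_{n+1}=(1-(2n+3)P_n)/(r^2 P_n)$ and manipulating the resulting rational expression, or alternatively invoking a Turán-type log-concavity inequality for the ratios $I_{n+3/2}(z)/I_{n+1/2}(z)$; this step is the only non-routine one in the argument.
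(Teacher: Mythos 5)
Your algebraic reduction is correct: with $\tfrac{n(n+1)}{2}-1=\tfrac{(n-1)(n+2)}{2}$ and $D_n=\int_0^T g(t)[P_1(R_*)-P_n(R_*)]\,dt>0$ (positivity by \eqref{2.10}), the inequality $\vartheta_n<\vartheta_{n+1}$ is indeed equivalent to $(n+1)(n+3)D_n>(n-1)(n+2)D_{n+1}$, and your $h_n$ is the correct pointwise integrand. In fact $h_n(r)>0$ is exactly equivalent to the statement that $n\bigl(n(n+1)/2-1\bigr)/[P_1(r)-P_n(r)]$ is strictly increasing in $n$ for $n\ge 2$, which is precisely the known pointwise monotonicity the paper invokes (citing \cite{Friedman2001}) to finish the lemma in one line. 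So you are attempting to re-derive from scratch the very fact the paper outsources to the literature.

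The problem is that you do not complete that derivation: everything hinges on the discrete convexity $P_{n-1}(r)+P_{n+1}(r)\ge 2P_n(r)$ for all $r>0$, and you only verify it at the two endpoints $r=0$ and $r\to\infty$, explicitly deferring the ``rigorous global argument.'' That is a genuine gap, and not a routine one. The listed properties give only monotonicity of $P_n$ in $n$ (see \eqref{2.10}), which says nothing about second differences; and the Tur\'an inequality $I_\nu^2\ge I_{\nu-1}I_{\nu+1}$ (log-concavity of $\nu\mapsto I_\nu(z)$) yields that $I_{\nu+1}/I_\nu$ is decreasing in $\nu$ but points the wrong way for deducing convexity of $P_n=\tfrac1r I_{n+3/2}/I_{n+1/2}$ in $n$ --- for that you would need a reverse-Tur\'an-type control, so the ``log-concavity'' you gesture at will not do the job as stated. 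The identity $r^2P_nP_{n+1}=1-(2n+3)P_n$ that you derive is correct and is a reasonable starting point, but the manipulation of the resulting rational expression is exactly the hard part and is absent. As written, the proof is incomplete; either carry out that argument in full or, as the paper does, cite the monotonicity of $n\bigl(n(n+1)/2-1\bigr)/[P_1(r)-P_n(r)]$ from \cite{Friedman2001}.
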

\begin{proof}
From \eqref{4.44}, we have
\begin{align}\nonumber
\vartheta_{n}=\frac{\int_{0}^{T}\frac{\gamma}{R^{3}_{*}(t)}dt}{\int_{0}^{T} \Phi(t)  R_{*}^{2}(t) P_{0}(R_{*}(t))\frac{[P_{1}(R_{*}(t))-P_{n}(R_{*}(t))]}{n\left(\frac{n(n+1)}{2}-1\right)}dt} \qquad n\ge2.
\end{align}
 Since the sequence $ \frac{  n\Big(n(n+1)/2-1\Big)
}{   P_1(R)-P_n(R)   }$ is strictly increasing in $n$ as $n\geqslant2$ (\cite{Friedman2001}), we get that this lemma is true.
\end{proof}

Lemma \ref{lem:5.2} and \ref{lem:5.3} hold for all $\mu>0$, so we define $\vartheta_{0}=\vartheta_{1}=\infty$. Set
\begin{equation}
\mu_{*}=\min \left\{\vartheta_{0}, \vartheta_{1}, \vartheta_{2}, \vartheta_{3}, \vartheta_{4}, \cdots\right\}.
\end{equation}
From Lemma \ref{lem:1}, we obtain that $\mu_{*}=\vartheta_{2}$.

\begin{lem}\label{thm3q.3}
For $n \ge 2$ and $0<\mu<\vartheta_{2}$, there exist $\delta>0$ and $M>0$ such that
\begin{equation}\nonumber
\left|\rho_{n, m}(t)\right|\leqslant\left|\rho_{n, m}(0)\right| e^{-\delta\left(n^{3}+1\right) t} \qquad t>M,
\end{equation}
where $\delta$ and $M$ are independent of $n$ and depend on $R_{*}(t), \Phi(t) ,T, \mu$ and $\gamma$.
\end{lem}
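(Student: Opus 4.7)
My approach is to exploit the explicit formula \re{5.26} by splitting the exponent into three parts whose decay contributions can be analyzed separately. I would first isolate the ``transport'' contribution, integrating it exactly as
\[
-\int_0^t \frac{R_*'(s)(n-1)}{R_*(s)}\,ds = (n-1)\ln\frac{R_*(0)}{R_*(t)},
\]
which produces a factor uniformly bounded by $(R_{max}/R_{min})^{n-1} \le e^{c_1 n}$ with $c_1 = \ln(R_{max}/R_{min})$ and $R_{min},R_{max}$ as in \re{4.7}. What remains in the exponent is $-\int_0^t f_n(s)\,ds$, where
\[
f_n(s) := \frac{\gamma n}{R_*^3(s)}\Big(\tfrac{n(n+1)}{2}-1\Big) - \mu \Phi(s) R_*^2(s) P_0(R_*(s))\big[P_1(R_*(s))-P_n(R_*(s))\big].
\]

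Next, I will use the $T$-periodicity of $R_*$ and $\Phi$: writing $t = mT + \tau$ with $\tau \in [0,T)$ yields $\int_0^t f_n\,ds = m F_n + \int_0^\tau f_n\,ds$ with $F_n := \int_0^T f_n\,ds$. Because $0 < P_0, P_n \le 1/3$ and $R_*(s)\in[R_{min},R_{max}]$, the integrand obeys $|f_n(s)| \le C(n^3+1)$ pointwise for some $C$ independent of $n$, so the boundary correction satisfies $\big|\int_0^\tau f_n\,ds\big| \le c_2(n^3+1)$.

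The crux of the argument, and the step I expect to be the main obstacle, is establishing a uniform lower bound $F_n \ge \delta_0(n^3+1)$ for all $n \ge 2$ with $\delta_0 > 0$ independent of $n$. I will write $F_n = A_n - B_n$ where
\[
A_n := \frac{\gamma n(n-1)(n+2)}{2}\int_0^T R_*^{-3}(t)\,dt,\qquad B_n := \mu\int_0^T \Phi R_*^2 P_0[P_1 - P_n]\,dt,
\]
using $n(n+1)-2 = (n-1)(n+2)$. Since $n(n-1)(n+2)/(n^3+1)$ attains its minimum $8/9$ at $n = 2$ over $\{2,3,\dots\}$, I obtain $A_n \ge \frac{4\gamma K}{9}(n^3+1)$ with $K := \int_0^T R_*^{-3}\,dt > 0$. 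Using $P_n > 0$ gives $B_n \le B_\infty := \mu\int_0^T \Phi R_*^2 P_0 P_1\,dt$ uniformly in $n$, so $F_n \ge \frac{2\gamma K}{9}(n^3+1)$ whenever $A_n \ge 2 B_\infty$, i.e.\ for all $n$ exceeding some explicit threshold $N_0$. For the finitely many indices $2 \le n < N_0$, strict positivity $F_n = C_n(\vartheta_n - \mu) > 0$ follows from the assumption $\mu < \vartheta_2 \le \vartheta_n$ combined with Lemma \ref{lem:1}, so $\min_{2 \le n < N_0} F_n/(n^3+1) > 0$. Taking the minimum over the two regimes yields $\delta_0$.

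Assembling everything,
\[
|\rho_{n,m}(t)| \le |\rho_{n,m}(0)|\exp\{c_1 n + c_2(n^3+1) - m\delta_0(n^3+1)\}.
\]
Using $c_1 n \le \tfrac{c_1}{3}(n^3+1)$ for $n \ge 2$ and $m T \ge t - T$, the exponent is bounded above by $(n^3+1)\big[\tfrac{c_1}{3}+c_2+\delta_0 - \tfrac{\delta_0}{T}t\big]$. Choosing $\delta = \delta_0/(2T)$ and $M = 2T(\tfrac{c_1}{3}+c_2+\delta_0)/\delta_0$ then gives $|\rho_{n,m}(t)| \le |\rho_{n,m}(0)| e^{-\delta(n^3+1)t}$ for $t > M$, with both $\delta$ and $M$ depending only on $R_*,\Phi,T,\mu,\gamma$ as required.
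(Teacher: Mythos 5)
Your proof is correct and follows essentially the same route as the paper: integrate the transport term $-\int_0^t (n-1)R_*'/R_*$ exactly, decompose $t=mT+\tau$, bound the partial-period contribution, and extract a per-period decay of order $n^3+1$ from the full-period integral. The only local difference is how the uniform lower bound $F_n\ge \delta_0(n^3+1)$ is obtained: the paper gets it in one stroke by rescaling with $\mu/\vartheta_2$ and using $\vartheta_2\le\vartheta_n$ to conclude $F_n\ge \frac{\mu}{\vartheta_2}\bigl(\frac{\vartheta_2}{\mu}-1\bigr)A_n\ge M_1(n^3+1)T$ for all $n\ge 2$ simultaneously, whereas you split into large $n$ (where $A_n$ dominates the uniformly bounded $B_n\le B_\infty$) and the finitely many remaining indices (where $F_n=(\vartheta_n-\mu)C_n>0$); both arguments are valid and rest on the same monotonicity Lemma \ref{lem:1}.
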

\begin{proof}
  For any $t > T$, there exist a positive integer $m$ and $\tau \in[0, T)$ such that $t=m T+\tau$.
From \eqref{2.8}, we have
\begin{align}\nonumber
 -\int_{0}^{\tau}&\!\frac{\gamma n}{R^{3}_{*}(t)}\Big(\frac{n(n+1)}{2}\!-\!1\Big)\!-\!\mu \Phi(t) R_{*}^{2}(t) P_{0}(R_{*}(t))[P_{1}(R_{*}(t))\!-\!P_{n}(R_{*}(t))]dt\\\nonumber
& \le  \int_{0}^{\tau}\mu \Phi(t) R_{*}^{2}(t) P_{0}(R_{*}(t))P_{1}(R_{*}(t))dt \\\label{5.36}
&\le \frac{1}{15}\vartheta_{2}\Phi^{\ast}R_{max}^{2}T ,
\end{align}
where $R_{max}=\max\limits_{t>0}\left\{R_{\ast}(t)\right\}$.
From \re{4.44}, we obtain
\begin{align}\nonumber
 \int_{0}^{T}&\!\frac{\gamma n}{R^{3}_{*}(t)}\Big(\frac{n(n+1)}{2}\!-\!1\Big)\!-\!\mu \Phi(t) R_{*}^{2}(t) P_{0}(R_{*}(t))[P_{1}(R_{*}(t))\!-\!P_{n}(R_{*}(t))]dt\\\nonumber
&= \frac{\mu}{\vartheta_{2}}  \int_{0}^{T}\frac{\vartheta_{2}}{\mu}\frac{\gamma n}{R^{3}_{*}(t)}\left(\frac{n(n+1)}{2}-1\right)-\vartheta_{2} \Phi (t) R_{*}^{2} (t) P_{0}(R_{*}(t))[P_{1}(R_{*}(t))\!-\!P_{n}(R_{*}(t))] dt\\\nonumber
&\geqslant  \frac{\mu}{\vartheta_{2}}(\frac{\vartheta_{2}}{\mu}-1) \int_{0}^{T}\frac{\gamma n}{R^{3}_{*}(t)}\left(\frac{n(n+1)}{2}-1\right)dt\\\label{5.231}
&\geqslant   \frac{\mu}{\vartheta_{2}}(\frac{\vartheta_{2}}{\mu}-1)\frac{\gamma}{R_{max}^3}\frac{n^{3}+1}{4}  T =M_{1}(n^{3}+1)T,
\end{align}
where 
$M_1=\frac{1}{4}\frac{\mu}{\vartheta_{2}}(\frac{\vartheta_{2}}{\mu}-1)  \frac{\gamma}{R_{max}^3}>0$.

The fact that $R_{*}(t)$ and $\Phi(t) $ are T-periodic, \re{5.26}, \re{5.36} and \re{5.231} imply
\begin{align}\nonumber
&\left|\rho_{n}(t)\right|\\\nonumber
&=\left|\rho_{n}(0)\right|\frac{R^{n-1}_{*}(0)}{R^{n-1}_{*}(t)}\exp\Big\{-\int_{0}^{mT+\tau}\!\frac{\gamma n}{R^{3}_{*}(t)}\Big(\frac{n(n+1)}{2}\!-\!1\Big)\!-\!\mu \Phi(t) R_{*}^{2}(t) P_{0}(R_{*}(t))[P_{1}(R_{*}(t))\!-\!P_{n}(R_{*}(t))]dt\Big\}\\\nonumber
&\leqslant\left|\rho_{n}(0)\right|\frac{R^{n-1}_{*}(0)}{R^{n-1}_{*}(\tau)}\exp\Big\{- m\int_{0}^{T}\!\frac{\gamma n}{R^{3}_{*}(t)}\Big(\frac{n(n+1)}{2}\!-\!1\Big)\!-\!\mu \Phi(t) R_{*}^{2}(t) P_{0}(R_{*}(t))[P_{1}(R_{*}(t))\!-\!P_{n}(R_{*}(t))]dt\Big\}\\\nonumber
&
\qquad \qquad  \qquad \exp\Big\{-\int_{0}^{\tau}\!\frac{\gamma n}{R^{3}_{*}(t)}\Big(\frac{n(n+1)}{2}\!-\!1\Big)\!-\!\mu \Phi(t) R_{*}^{2}(t) P_{0}(R_{*}(t))[P_{1}(R_{*}(t))\!-\!P_{n}(R_{*}(t))]dt\Big\}\\\nonumber
& \le \left|\rho_{n}(0)\right|\frac{R^{n-1}_{max}}{R^{n-1}_{min}}\exp\Big\{\frac{1}{15}\vartheta_{2}\Phi^{\ast}R_{max}^{2}T \Big\}\exp\Big\{-M_{1}(n^{3}+1)mT\Big\}\\\nonumber
& \le \left|\rho_{n}(0)\right|\frac{R^{n-1}_{max}}{R^{n-1}_{min}}\exp\Big\{\frac{1}{15}\vartheta_{2}\Phi^{\ast}R_{max}^{2}T \Big\}\exp\Big\{-M_{1}(n^{3}+1)(t-T)\Big\}.
\end{align}


Then there exists $M>0$ such that
\begin{equation}\nonumber
\left|\rho_{n}(t)\right|\leqslant\left|\rho_{n}(0)\right|e^{-\frac{M_{1}}{2}(n^{3}+1)t} \qquad t>M.
\end{equation}
Taking $\delta=\frac{M_{1}}{2}$, we complete the proof.
\end{proof}

\vskip 3mm\noindent

Our main result for three--space dimensional problem is the following theorem.
\begin{thm}\label{thm:5.1}
Assume $\rho_{0} \in L^{2}(\partial B_{R_{*}(0)})$. Then\\
(i) if $\mu\in(0, \vartheta_{2})$, then the radially symmetric T-periodic solution $\left(\sigma_{*}(r, t), p_{*}(r, t), R_{*}(t)\right)$ (obtained in Corollary \ref{cor:1.3333}) is linearly stable, i.e., for any positive integer $k$, there exist $\d>0$, $C>0$ and $t_0>0$ such that
\begin{align}\nonumber
 \|\rho( \theta,\phi, t)-\sum_{m=-1}^1 \rho_{1,m}(0) Y_{1,m}\|_{H^{k+\frac{1}{2}}(\partial B_{R_{*}(t)})}\le Ce^{-\d t}\quad
  \for t>t_0.
 \end{align}
(ii) If $\mu> \vartheta_{2}$, then the radially symmetric T-periodic solution $\left(\sigma_{*}(r, t), p_{*}(r, t), R_{*}(t)\right)$  is linearly unstable.
\end{thm}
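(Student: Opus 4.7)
The plan is to diagonalize the linearized dynamics via the spherical-harmonic expansion already carried out in the excerpt, which reduces the problem to the scalar formula \eqref{5.26} for each coefficient $\rho_{n,m}(t)$. The three mode-by-mode estimates already in hand (Lemma \ref{lem:5.2} for $n=0$, Lemma \ref{lem:5.3} for $n=1$, and Lemma \ref{thm3q.3} for $n\geq 2$ when $\mu<\vartheta_2$) then cover part (i), while a single unstable $n=2$ mode produces part (ii).

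\textbf{Proof of (i).} By Lemma \ref{lem:5.3}, the $n=1$ sector is constant in time and is annihilated by the subtraction:
\begin{equation*}
\rho(\theta,\phi,t) - \sum_{m=-1}^{1}\rho_{1,m}(0)Y_{1,m}(\theta,\phi) = \rho_{0,0}(t)Y_{0,0}(\theta,\phi) + \sum_{n\geq 2}\sum_{m=-n}^{n}\rho_{n,m}(t)Y_{n,m}(\theta,\phi).
\end{equation*}
Because $R_{\min}\leq R_*(t)\leq R_{\max}$ (see \eqref{4.7}), the $H^{k+1/2}(\partial B_{R_*(t)})$ norm is equivalent, uniformly in $t$, to the spherical-harmonic Sobolev norm $\sum_{n,m}(1+n(n+1))^{k+1/2}|\rho_{n,m}(t)|^2$. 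The $n=0$ contribution is handled by Lemma \ref{lem:5.2}, and for $n\geq 2$ Lemma \ref{thm3q.3} gives $|\rho_{n,m}(t)|\leq|\rho_{n,m}(0)|e^{-\delta(n^3+1)t}$ with $\delta$ independent of $n$. The cubic-in-$n$ decay rate $\delta(n^3+1)$ dominates any polynomial-in-$n$ weight, so for $t_0$ large enough one has $(1+n(n+1))^{k+1/2}e^{-2\delta(n^3+1)t}\leq Ce^{-\delta t}$ uniformly in $n\geq 2$ and $t\geq t_0$. Summing and using $\sum_{n,m}|\rho_{n,m}(0)|^2\leq\|\rho_0\|^2_{L^2(\partial B_{R_*(0)})}<\infty$ closes the estimate.

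\textbf{Proof of (ii).} The key observation is that $T$-periodicity of $R_*$ forces
\begin{equation*}
\int_0^T \frac{dR_*}{dt}\frac{n-1}{R_*(t)}\,dt=(n-1)\bigl[\ln R_*(T)-\ln R_*(0)\bigr]=0.
\end{equation*}
Evaluating \eqref{5.26} over one period and iterating therefore gives
\begin{equation*}
\rho_{n,m}(kT)=\rho_{n,m}(0)\exp\!\left\{-k\!\int_0^T\!\!\left[\frac{\gamma n}{R_*^3(t)}\Big(\tfrac{n(n+1)}{2}-1\Big)-\mu\Phi(t)R_*^2(t)P_0(R_*(t))\bigl[P_1(R_*(t))-P_n(R_*(t))\bigr]\right]dt\right\}.
\end{equation*}
By the definition \eqref{4.44} of $\vartheta_2$, the hypothesis $\mu>\vartheta_2$ makes the $n=2$ exponent a strictly positive number $\lambda>0$. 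Choosing $\rho(\theta,\phi,0)=Y_{2,0}(\theta,\phi)$ produces $|\rho_{2,0}(kT)|=e^{k\lambda}\to\infty$ as $k\to\infty$, which is the stated linear instability.

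\textbf{Main obstacle.} The subtle point lies in part (i): one must verify that the single $n$-independent rate $\delta$ from Lemma \ref{thm3q.3} beats not only the Sobolev weight $(1+n(n+1))^{k+1/2}$ but also the geometric prefactor $\bigl(R_{\max}/R_{\min}\bigr)^{n-1}$ implicit in \eqref{5.26}. Since $\delta(n^3+1)t$ grows cubically in $n$ while these extra factors are at most exponential in $n$, taking $t\geq t_0$ large enough absorbs them uniformly in $n\geq 2$, so half of the exponent can be spent to control the weight while the other half survives as the $e^{-\delta t}$ tail. The rest is routine Parseval bookkeeping.
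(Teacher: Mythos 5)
Your proposal is correct and follows essentially the same route as the paper: the spherical-harmonic reduction to \eqref{5.26}, the three mode-by-mode lemmas (Lemmas \ref{lem:5.2}, \ref{lem:5.3}, \ref{thm3q.3}) summed against the Sobolev weight for part (i), and the observation that the per-period exponent of the $n=2$ mode is positive precisely when $\mu>\vartheta_{2}$ for part (ii). The only cosmetic difference is that you evaluate the instability at $t=kT$ (where periodicity kills the $\frac{dR_*}{dt}\frac{n-1}{R_*}$ term exactly), whereas the paper writes $t=mT+\tau$ and bounds the remainder over $[0,\tau]$; both hinge on the same sign condition coming from the definition \eqref{4.44} of $\vartheta_2$.
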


\begin{proof}
For any positive integer $k$, Lemma \ref{lem:5.3}, \cite[Lemma 8.2]{Friedman2001a}, Lemma \ref{lem:5.2} and Lemma \ref{thm3q.3} imply that there exist $\delta>0$, $C>0$ and $M>0$ such that
\begin{align}\nonumber
\Big\|&\rho( \theta,\phi, t)-\sum_{m=-1}^1 \rho_{1,m}(0) Y_{1,m}\Big\|_{H^{k+\frac{1}{2}}(\partial B_{R_{*}(t)})}^{2}\\\nonumber
&=\Big\|\sum_{n=0,n\neq1}^\infty\sum_{m=-n}^{n}\rho_{n,m}(t) Y_{n,m}(\theta,\phi)\Big\|_{H^{k+\frac{1}{2}}(\partial B_{R_{*}(t)})}^{2}\\\nonumber
& \le \sum_{n=0, n\neq 1}^\infty \sum_{m=-n}^{n}(1+n^{2k+1}) |\rho_{n,m}(t) |^2\\
& \le \sum_{n=0, n\neq 1}^\infty \sum_{m=-n}^{n}(1+n^{2k+1}) \left|\rho_{n, m}(0)\right|^2 e^{-2\delta\left(n^{3}+1\right) t}\nonumber
 \le Ce^{-\delta t} \qquad for~ t>M.
 \end{align}
%
%

We next turn to linear instability for $\mu>\vartheta_{2}$. For any $t > T$, there exist a positive integer $m$ and $\tau \in[0, T)$ such that $t=m T+\tau$.  By \re{2.10}, we derive
\begin{align}\nonumber
\int_{0}^{\tau}\!&-\frac{4\gamma}{R^{3}_{*}(t)}\!+\!\mu \Phi(t) R_{*}^{2}(t) P_{0}(R_{*}(t))[P_{1}(R_{*}(t))\!-\!P_{2}(R_{*}(t))]dt\\\label{4.55}
&\geqslant- \int_{0}^{\tau}\!\frac{4\gamma}{R^{3}_{*}(t)} dt \geqslant-\frac{4\gamma}{R^{3}_{min}}T,
\end{align}
where $R_{min}=\min\limits_{t>0}\left\{R_{\ast}(t)\right\}$.
From \re{4.44}, we obtain
\begin{align}\nonumber
\int_{0}^{T}\!&-\frac{4\gamma}{R^{3}_{*}(t)}\!+\!\mu \Phi(t) R_{*}^{2}(t) P_{0}(R_{*}(t))[P_{1}(R_{*}(t))\!-\!P_{2}(R_{*}(t))]dt\\\nonumber
& = \frac{\mu}{\vartheta_{2}}  \int_{0}^{T}\!-\frac{\vartheta_{2}}{\mu}\frac{4\gamma}{R^{3}_{*}(t)}\!+\!\vartheta_{2}\Phi(t) R_{*}^{2}(t) P_{0}(R_{*}(t))[P_{1}(R_{*}(t))\!-\!P_{2}(R_{*}(t))]dt\\\nonumber
&=\frac{\mu}{\vartheta_{2}}(1-\frac{\vartheta_{2}}{\mu}) \int_{0}^{T}\!\frac{4\gamma}{R^{3}_{*}(t)}dt\\\label{4.56}
&\geqslant \frac{\mu}{\vartheta_{2}}(1-\frac{\vartheta_{2}}{\mu}) \frac{4\gamma}{R^{3}_{max}}T,
\end{align}
where $R_{max}=\max\limits_{t>0}\left\{R_{\ast}(t)\right\}$.

The fact that $R_{*}(t)$ and $\Phi(t) $ are T-periodic, \re{5.26}, \re{4.55} and \re{4.56} imply
\begin{align}\nonumber
\left|\rho_{2, m}(t)\right|
&=\left|\rho_{2, m}(0)\right|\!\exp\! \Big\{\!\!-\!\!\int_{0}^{t}\frac{d R_{*}}{d t}\frac{1}{R_{*}(t)}\!+\!\frac{4\gamma}{R^{3}_{*}(t)}\!-\!\mu \Phi(t) R_{*}^{2}(t) P_{0}(R_{*}(t))[P_{1}(R_{*}(t))\!-\!P_{2}(R_{*}(t))]dt\Big\}\\\nonumber
& =\left|\rho_{2, m}(0)\right|\frac{R_{*}(0)}{R_{*}(t)}\!\exp\! \Big\{\int_{0}^{t}\!-\frac{4\gamma}{R^{3}_{*}(t)}\!+\!\mu \Phi(t) R_{*}^{2}(t) P_{0}(R_{*}(t))[P_{1}(R_{*}(t))\!-\!P_{2}(R_{*}(t))]dt\Big\}\\\nonumber
& =\left|\rho_{2, m}(0)\right|\frac{R_{*}(0)}{R_{*}(\tau)}\!\exp\! \Big\{m\int_{0}^{T}\!-\frac{4\gamma}{R^{3}_{*}(t)}\!+\!\mu \Phi(t) R_{*}^{2}(t) P_{0}(R_{*}(t))[P_{1}(R_{*}(t))\!-\!P_{2}(R_{*}(t))]dt\Big\}\\\nonumber
&\quad 
 \qquad \qquad \qquad \exp \Big\{\int_{0}^{\tau}\!-\frac{4\gamma}{R^{3}_{*}(t)}\!+\!\mu \Phi(t) R_{*}^{2}(t) P_{0}(R_{*}(t))[P_{1}(R_{*}(t))\!-\!P_{2}(R_{*}(t))]dt\Big\}\\\nonumber
& \geqslant\left|\rho_{2, m}(0)\right| \frac{R_{min}}{R_{max}}\exp\Big\{-\frac{4\gamma}{R^{3}_{min}}T \Big\}\exp\Big\{\frac{\mu}{\vartheta_{2}}(1-\frac{\vartheta_{2}}{\mu}) \frac{4\gamma}{R^{3}_{max}}mT\Big\}\\\nonumber
& \geqslant\left|\rho_{2, m}(0)\right| \frac{R_{min}}{R_{max}}\exp\Big\{-\frac{4\gamma}{R^{3}_{min}}T \Big\}\exp\Big\{\frac{\mu}{\vartheta_{2}}(1-\frac{\vartheta_{2}}{\mu}) \frac{4\gamma}{R^{3}_{max}}(t-T)\Big\}.
\end{align}
Then
\begin{align*}
  \left|\rho_{2,m}(t)\right| \rightarrow \infty \qquad  t \rightarrow \infty.
\end{align*}
Therefore, the proof is completed.
\end{proof}
\section*{Acknowledgements}
The research was supported by NNSF of China (11471339) and NSF of Guangdong (2018A030313523).
\bibliographystyle{elsarticle-num}
\bibliography{2(2)}

\begin{thebibliography}{10}
\expandafter\ifx\csname url\endcsname\relax
  \def\url#1{\texttt{#1}}\fi
\expandafter\ifx\csname urlprefix\endcsname\relax\def\urlprefix{URL }\fi
\expandafter\ifx\csname href\endcsname\relax
  \def\href#1#2{#2} \def\path#1{#1}\fi

\bibitem{Bai2013}
M.~Bai, S.~Xu, Qualitative analysis of a mathematical model for tumor growth
  with a periodic supply of external nutrients, Pac. J. Appl. Math. 5~(4)
  (2013) 217--223.

\bibitem{Huang2019}
Y.~Huang, Z.~Zhang, B.~Hu, Linear stability for a free boundary tumor model
  with a periodic supply of external nutrients, Math. Methods Appl. Sci. 42~(3)
  (2019) 1039--1054.

\bibitem{Byrne1997}
H.~M. Byrne, M.~A.~J. Chaplain, Free boundary value problems associated with
  the growth and development of multicellular spheroids, European J. Appl.
  Math. 8~(6) (1997) 639--658.

\bibitem{Friedman1999}
A.~Friedman, F.~Reitich, Analysis of a mathematical model for the growth of
  tumors, J. Math. Biol. 38~(3) (1999) 262--284.

\bibitem{Friedman2001}
A.~Friedman, F.~Reitich, Symmetry-breaking bifurcation of analytic solutions to
  free boundary problems: an application to a model of tumor growth, Trans.
  Amer. Math. Soc. 353~(4) (2001) 1587--1634.

\bibitem{Friedman2001a}
A.~Friedman, F.~Reitich, Nonlinear stability of a quasi-static {S}tefan problem
  with surface tension: a continuation approach, Ann. Scuola Norm. Sup. Pisa
  Cl. Sci. (4) 30~(2) (2001) 341--403.

\bibitem{Fontelos2003}
M.~A. Fontelos, A.~Friedman, Symmetry-breaking bifurcations of free boundary
  problems in three dimensions, Asymptot. Anal. 35~(3-4) (2003) 187--206.

\bibitem{Friedman2006}
A.~Friedman, B.~Hu, Bifurcation from stability to instability for a free
  boundary problem arising in a tumor model, Arch. Ration. Mech. Anal. 180~(2)
  (2006) 293--330.

\bibitem{Friedman2006c}
A.~Friedman, B.~Hu, Asymptotic stability for a free boundary problem arising in
  a tumor model, J. Differential Equations 227~(2) (2006) 598--639.

\bibitem{Cui2007}
S.~Cui, J.~Escher, Bifurcation analysis of an elliptic free boundary problem
  modelling the growth of avascular tumors, SIAM J. Math. Anal. 39~(1) (2007)
  210--235.

\bibitem{Friedman2007}
A.~Friedman, Mathematical analysis and challenges arising from models of tumor
  growth, Math. Models Methods Appl. Sci. 17~(suppl.) (2007) 1751--1772.

\bibitem{Friedman2007a}
A.~Friedman, B.~Hu, Bifurcation for a free boundary problem modeling tumor
  growth by {S}tokes equation, SIAM J. Math. Anal. 39~(1) (2007) 174--194.

\bibitem{Zhang2009}
H.~Zhang, C.~Qu, B.~Hu, Bifurcation for a free boundary problem modeling a
  protocell, Nonlinear Anal. 70~(7) (2009) 2779--2795.

\bibitem{Lowengrub2010}
J.~S. Lowengrub, H.~B. Frieboes, F.~Jin, Y.-L. Chuang, X.~Li, P.~Macklin, S.~M.
  Wise, V.~Cristini, Nonlinear modelling of cancer: bridging the gap between
  cells and tumours, Nonlinearity 23~(1) (2010) R1--R91.

\bibitem{Escher2011}
J.~Escher, A.-V. Matioc, Bifurcation analysis for a free boundary problem
  modeling tumor growth, Arch. Math. (Basel) 97~(1) (2011) 79--90.

\bibitem{Hao2012}
W.~Hao, J.~D. Hauenstein, B.~Hu, Y.~Liu, A.~J. Sommese, Y.-T. Zhang,
  Bifurcation for a free boundary problem modeling the growth of a tumor with a
  necrotic core, Nonlinear Anal. Real World Appl. 13~(2) (2012) 694--709.

\bibitem{Hao2012a}
W.~Hao, J.~D. Hauenstein, B.~Hu, Y.~Liu, A.~J. Sommese, Y.-T. Zhang,
  Continuation along bifurcation branches for a tumor model with a necrotic
  core, J. Sci. Comput. 53~(2) (2012) 395--413.

\bibitem{Wu2012}
J.~Wu, F.~Zhou, Bifurcation analysis of a free boundary problem modelling
  tumour growth under the action of inhibitors, Nonlinearity 25~(10) (2012)
  2971--2991.

\bibitem{Wang2014}
Z.~Wang, Bifurcation for a free boundary problem modeling tumor growth with
  inhibitors, Nonlinear Anal. Real World Appl. 19 (2014) 45--53.

\bibitem{Wu2015}
J.~Wu, S.~Cui, Bifurcation analysis of a mathematical model for the growth of
  solid tumors in the presence of external inhibitors, Math. Methods Appl. Sci.
  38~(9) (2015) 1813--1823.

\bibitem{Wu2017}
J.~Wu, F.~Zhou, Asymptotic behavior of solutions of a free boundary problem
  modeling tumor spheroid with {G}ibbs-{T}homson relation, J. Differential
  Equations 262~(10) (2017) 4907--4930.

\bibitem{Friedman2008}
A.~Friedman, B.~Hu, Stability and instability of {L}iapunov-{S}chmidt and
  {H}opf bifurcation for a free boundary problem arising in a tumor model,
  Trans. Amer. Math. Soc. 360~(10) (2008) 5291--5342.

\bibitem{Wang2017}
Z.~Wang, J.~Xu, J.~Li, Bifurcation analysis for a free boundary problem
  modeling growth of solid tumor with inhibitors, Commun. Math. Res. 33~(1)
  (2017) 85--96.

\bibitem{Huang2017}
Y.~Huang, Z.~Zhang, B.~Hu, Bifurcation for a free-boundary tumor model with
  angiogenesis, Nonlinear Anal. Real World Appl. 35 (2017) 483--502.

\bibitem{Li2017}
F.~Li, B.~Liu, Bifurcation for a free boundary problem modeling the growth of
  tumors with a drug induced nonlinear proliferation rate, J. Differential
  Equations 263~(11) (2017) 7627--7646.

\bibitem{Pan2018a}
H.~Pan, R.~Xing, Bifurcation for a free boundary problem modeling tumor growth
  with {ECM} and {MDE} interactions, Nonlinear Anal. Real World Appl. 43 (2018)
  362--377.

\bibitem{Zhuang2018}
Y.~Zhuang, S.~Cui, Analysis of a free boundary problem modeling the growth of
  multicell spheroids with angiogenesis, J. Differential Equations 265~(2)
  (2018) 620--644.

\bibitem{Cui2018}
S.~Cui, Y.~Zhuang, Bifurcation solutions of a free boundary problem modeling
  tumor growth with angiogenesis, J. Math. Anal. Appl. 468~(1) (2018) 391--405.

\bibitem{Zhuang2018a}
Y.~Zhuang, Asymptotic behavior of solutions of a free-boundary tumor model with
  angiogenesis, Nonlinear Anal. Real World Appl. 44 (2018) 86--105.

\bibitem{Zhou2015}
F.~Zhou, J.~Wu, Stability and bifurcation analysis of a free boundary problem
  modelling multi-layer tumours with {G}ibbs-{T}homson relation, European J.
  Appl. Math. 26~(4) (2015) 401--425.

\bibitem{Wu2016}
J.~Wu, Stationary solutions of a free boundary problem modeling the growth of
  tumors with {G}ibbs-{T}homson relation, J. Differential Equations 260~(7)
  (2016) 5875--5893.

\bibitem{Wu2017a}
J.~Wu, Analysis of a mathematical model for tumor growth with {G}ibbs-{T}homson
  relation, J. Math. Anal. Appl. 450~(1) (2017) 532--543.

\bibitem{Cui2000a}
S.~Cui, A.~Friedman, Analysis of a mathematical model of the effect of
  inhibitors on the growth of tumors, Mathematical Biosciences 164~(2) (2000)
  103--137.

\bibitem{Cui2002}
S.~Cui, Analysis of a mathematical model for the growth of tumors under the
  action of external inhibitors, Journal of mathematical biology 44~(5) (2002)
  395--426.

\bibitem{Wu2007}
J.~Wu, S.~Cui, Asymptotic behaviour of solutions of a free boundary problem
  modelling the growth of tumours in the presence of inhibitors, Nonlinearity
  20~(10) (2007) 2389.

\bibitem{Li2019}
R.~Li, B.~Hu, A parabolic--hyperbolic system modeling the growth of a tumor,
  Journal of Differential Equations 267~(2) (2019) 693--741.

\bibitem{Friedman2007b}
A.~Friedman, B.~Hu, Bifurcation from stability to instability for a free
  boundary problem modeling tumor growth by stokes equation, Journal of
  mathematical analysis and applications 327~(1) (2007) 643--664.

\bibitem{Friedman2006a}
A.~Friedman, A free boundary problem for a coupled system of elliptic,
  hyperbolic, and stokes equations modeling tumor growth, Interfaces and Free
  boundaries 8~(2) (2006) 247--261.

\bibitem{Wu2009}
J.~Wu, S.~Cui, Asymptotic behavior of solutions of a free boundary problem
  modelling the growth of tumors with {S}tokes equations, Discrete Contin. Dyn.
  Syst. 24~(2) (2009) 625--651.

\bibitem{Watson1995}
G.~N. Watson, A treatise on the theory of {B}essel functions, Cambridge
  Mathematical Library, Cambridge University Press, Cambridge, 1995.

\end{thebibliography}

\end{document}